\pgfplotsset{compat=1.10}
\definecolor{racing}{rgb}{0.7,0.1,0.2}
\definecolor{french}{rgb}{0,0.2,0.7}
\numberwithin{equation}{section} 
\newtheorem{theorem}{Theorem}[section]
\newtheorem{corollary}[theorem]{Corollary}
\newtheorem{problem}[theorem]{Problem}
\newtheorem{lemma}[theorem]{Lemma}
\newtheorem{proposition}[theorem]{Proposition}
\theoremstyle{definition} 
\newtheorem{definition}[theorem]{Definition}
\newtheorem{remark}[theorem]{Remark}
\newcommand{\R}{\mathbb{R}}	
\newcommand{\N}{\mathbb{N}} 
\newcommand{\dx}{\,\mathrm{d}x}	
\newcommand{\ds}{\,\mathrm{d}S}	
\renewcommand{\d}{\mathrm{d}}
\newcommand{\weak}{\rightharpoonup}
\newcommand{\nnu}{\bm{\nu}}  
\newcommand{\eps}{\varepsilon}
\newcommand{\norm}[1]{\left\lVert #1 \right\lVert}
\newcommand{\abs}[1]{\left| #1 \right|}
\newcommand{\sub}{\subseteq}
\DeclareMathOperator{\dist}{\mathrm{dist}}
\newenvironment{bvp}{\left\{\begin{aligned}  }{\end{aligned}\right.}
\author[R. Ognibene]{Roberto Ognibene}
\address{Dipartimento di Matematica e Applicazioni \\
	Universit\`a degli studi di Milano-Bicocca \\
	Via Roberto Cozzi, 55, 20126 Milano, Italy}
\email{roberto.ognibene@unimib.it}
\author[B. Velichkov]{Bozhidar Velichkov}
\address{Dipartimento di Matematica \\
	Universit\`a di Pisa \\
	Largo Bruno Pontecorvo, 5, 56127 Pisa, Italy}
\email{bozhidar.velichkov@unipi.it}
\begin{document}
	
	\title[A survey on the optimal partition problem]{A survey on the optimal partition problem}

	\keywords{optimal partition, free boundary, harmonic maps into singular spaces, multivalued harmonic functions}
	\subjclass{
		35R35,   
		49Q10   
	}

	\begin{abstract}
		This survey synthesizes the current state of the art on the regularity theory for solutions to the optimal partition problem. Namely, we consider non-negative, vector-valued Sobolev functions whose components have mutually disjoint support, and which are either local minimizers of the Dirichlet energy or, more generally, critical points satisfying a system of variational inequalities. This is particularly meaningful as the problem has emerged on several occasions and in diverse contexts: our aim is then to provide a coherent point of view and an up-to-date account of the progress concerning regularity of the solutions and their free boundaries, both in the interior and up to a fixed boundary.
		
	\end{abstract}
	
	\maketitle
	

	\section{Introduction}\label{sec:intr}
	
	Let $D\sub\R^d$ be an open and connected set, let $N\in\N$ and let
	\begin{equation*}
		\Sigma_N:=\{X\in\R^N\colon X_i\ge 0\ \text{and}\  X_i X_j=0~\text{for all }i,j=1,\dots,N,~i\neq j\}.
	\end{equation*}
	We work in the space
	\begin{equation*}
		H^1(D,\Sigma_N):=\{u\in H^1(D,\R^N)\colon u(x)\in\Sigma_N~\text{for a.e. }x\in D\},
	\end{equation*}
	pointing out that the fact that $u(x)\in\Sigma_N$ for a.e. $x\in D$ means that all the components 
	$$u_1,\dots,u_N\colon D  \to \R$$
	are non-negative functions with disjoint support (we might also say that the functions are \emph{segregated}). The main character of the present survey are non-negative local minimizers (and, to some extent, a certain notion of critical points, see \Cref{def:extremality}) of the corresponding energy functional
	\begin{equation*}
		E(u,D):=\int_D |\nabla u|^2\dx=\sum_{i=1}^N \int_D|\nabla u_i|^2\dx.
	\end{equation*}
	\begin{definition}\label{def:minimizer}
		Given a function $u:D\to\Sigma_N$, we say that 
		\begin{itemize}
			\item $u$ is a minimizer of $E$ in $D$ if $u\in H^1(D,\Sigma_N)$ and it holds
			\begin{equation*}
				E(u,D)\leq E(v,D)\quad\text{for all }v\in H^1(D,\Sigma_N)~\text{such that }u-v\in H^1_0(D,\R^N).
			\end{equation*}
			\item $u$ is a local minimizer of $E$ in $D$ if:
			\begin{itemize}
				\item $u\in H^1_{\textup{loc}}(D,\Sigma_N)$, that is, $u\in H^1(\Omega,\Sigma_N)$ for all open $\Omega\Subset D$; 
				\item $u$ is a minimizer of $E$ in $\Omega$.
			\end{itemize}   
		\end{itemize}
		\end{definition}
		
		The focus of this survey is on the regularity of the local minimizers (and critical points): besides the regularity of the functions themselves (it turns out that they are Lipschitz continuous, see \Cref{sec:known_results}), we are mainly interested in the regularity of the free boundary (the nodal set), defined as
		\begin{equation*}
			\mathcal{F}_D(u):=\bigcap_{i=1}^N \{x\in D\colon u_i(x)= 0\}.
		\end{equation*}

		It turns out that the problem we just described has appeared in the literature at different times and in diverse contexts as a model for various types of phenomena. 
		In \Cref{sec:pov} we describe the ones we are aware of, that is: harmonic maps with singular target spaces (\Cref{subsec:harmonic}), limit of competition-diffusion models (\Cref{subsec:comp-diff}), spectral optimal partition problems (\Cref{subsec:optimal}) and multi-valued harmonic functions (\Cref{subsec:2-val}).
		
		We wish to emphasize a crucial aspect: since the model we focus on (i.e.,  minimizers of $E(\cdot,D)$ acting on $H^1(D,\Sigma_N)$) represents a particular case within each of the broader theories discussed (we refer to each subsection for instances of such generalizations), its study plays a central role. On one hand, this focus allows for a clearer exposition, more straightforward statements, and a reduction in the technicalities of the proofs. On the other hand, and more importantly, its thorough understanding is crucial: this model, in fact, acts as a bridge between different theories, so that any further step made in its study is expected to translate into progress across all the areas related to it.
		
		This being said, in \Cref{sec:known_results}, after introducing the main tools used to study such problems, we review the known results concerning regularity of the free boundary, both in the interior (\Cref{sec:interior}) and up to the fixed boundary (\Cref{sec:boundary}).
		
		
		\begin{remark}
			We emphasize that, since the optimal partition problem lies as the intersection of various fields, the known results about this problem that we recall in \Cref{sec:known_results} actually come as particular cases of more general results proved within a certain framework (among the ones described in \Cref{sec:pov}). Hence, since mentioning this fact any time would make the exposition heavier, we adopt the convention that this is generally the case and refer the reader to the cited papers for the complete statements.
		\end{remark}
		
		\begin{remark}
			This survey is addressed to researchers specialized in the analysis of PDEs and regularity theory. We presuppose the reader's familiarity with elliptic regularity, and the basic tools of free boundary problems (such as monotonicity formulas). This allows us to focus directly on the specific features of the optimal partition problem.
		\end{remark}
		
		\section{Points of view}\label{sec:pov}
		
		In this section, we describe the various theories for which our model, i.e. the functional $E(\cdot,D)$ acting on $H^1(D,\Sigma_N)$, represents a particular case.
		
		\subsection{Harmonic maps with singular target spaces}\label{subsec:harmonic}
		
		Harmonic maps are classically defined as critical points of the Dirichlet energy functional naturally associated to functions
		\begin{equation*}
			u\colon D\to\mathcal{N},
		\end{equation*}
		with $D\sub\mathcal{M}$ being an open set and $(\mathcal{M},g)$ and $(\mathcal{N},h)$ being smooth Riemannian manifolds. More precisely, by taking an extrinsic viewpoint, let us first assume that $(\mathcal{N},h)$ is isometrically embedded into $\R^N$, for some $N\in\N$ (this is always possible thanks to Nash's theorem \cite{nash}). This way, we can define the Dirichlet energy in a natural way, for $u\colon D\to \R^N$ as
		\begin{equation*}
			\int_{D}|\nabla_{g} u|^2\,\d V_{g}=\sum_{i=1}^N\int_{D}|\nabla_{g} u_i|^2\,\d V_{g},
		\end{equation*} 
		and the corresponding Sobolev space
		\begin{equation*}
			H^1(D,\mathcal{N}):=\{u\in H^1(D,\R^N)\colon u(x)\in \mathcal{N}~\text{for a.e. }x\in D\}.
		\end{equation*}
		We point out that one can also define Sobolev spaces of maps between manifolds (and, more in general, metric spaces) in an intrinsic way which does not make use of embedding theorems; we refer to the seminal paper \cite{korevaar-schoen-1} for the basics of this theory. Harmonic maps are then defined as critical points of the Dirichlet energy with respect to suitable outer variations (one needs to deal with the fact that the target space $\mathcal{N}$ is not necessarily linear); if a harmonic map is critical with respect to inner variations (which are always admissible), then it is usually called \emph{stationary}. However, in this paragraph, we consider a map $u\in H^1(D,\mathcal{N})$ being a minimizer of the Dirichlet energy, which is then always a stationary harmonic map in the sense above.
		
		The literature on harmonic maps is huge, even when restricted to the topic treated herein, i.e., regularity theory (we refer to \cite{giaquinta,Lin-Wang-book-harmonic-maps,Simon-book-harmonic-maps} for a detailed and  comprehensive study of regularity of harmonic maps). We here briefly recall just the main results in the field, trying to give an up-to-date account. Given an energy-minimizing map $u\in H^1(D,\mathcal{N})$, $D\sub\mathcal{M}$ open, for general smooth Riemannian manifolds $\mathcal{M}$ and $\mathcal{N}$, we know that there exists a closed set $\mathrm{Sing}\,(u)\sub D$ such that
		\begin{itemize}
			\item $u$ is $C^\infty$ in $D\setminus\mathrm{Sing}\,(u)$;
			\item $\mathrm{Sing}\,(u)$ is $(d-3)$-rectifiable and has locally finite $(d-3)$-Hausdorff measure.
		\end{itemize}
		These results are essentially sharp, the latter being quite recent (we refer to the original article \cite{naber-valtorta} for comments on its sharpness). We also remark such results have been proved in a series of paper and a cornerstone role is played by \cite{SU} by Schoen \& Uhlenbeck. In particular, when $\mathcal{M}$ and $\mathcal{N}$ are arbitrary smooth manifolds, one cannot exclude discontinuities in the minimizers, for instance one can think of the (nowadays) standard example 
		\begin{align*}
			\R^d\to \mathbb{S}^{d-1}, \qquad
			x\mapsto \frac{x}{|x|}\,,
		\end{align*}
		which is minimizing for $d\geq 3$ (see \cite{lin1987remarque}) and has a discontinuity at the origin. Nevertheless, while the structure and the geometry of the (smooth) domain manifold plays essentially no role concerning the regularity of the minimizers, if one requires additional structural assumptions on the target manifold, then stronger results can be obtained. In particular, if the target manifold has nonpositive sectional curvature, then the Dirichlet energy becomes convex and so harmonic maps (with fixed Dirichlet data) are unique and coincide with minimizers. Moreover, in such a setting, minimizers are smooth, essentially as a consequence of the validity of a Bochner-type formula. This last result is contained in the pioneering work by Eells \& Sampson \cite{ES}. However, for a nice exposition on the topic, we refer to \cite{schoen}.
		
		One can then wonder how the regularity properties of the minimizers depends on the regularity of the target manifold $\mathcal{N}$. This question was essentially solved (up to some later refinements) in \cite{gromov-schoen}, where the authors consider harmonic maps with values into non-positively curved metric spaces (thus admitting singularities). Let us be more precise. Let $(X,d_X)$ be a length space, i.e. a metric space endowed with the intrinsic metric and such that every two points are connected by a distance-realizing curve, and let us assume it is \emph{non-positively curved in the sense of Alexandrov (NPC or CAT(0) space)}. The notion of curvature in the sense of Alexandrov is a generalization of the notion of sectional curvature adapted to general metric spaces. We give the following definition, referring to \cite{korevaar-schoen-1} for more details.
		\begin{definition}
			A metric space $(X,d_X)$ is \emph{non-positively curved} (NPC, or CAT(0) space) if 
			\begin{itemize}
				\item it is a geodesic space, that is: for every pair of points $P,Q\in X$ there exists a rectifiable curve whose length is $d_X(P,Q)$;
				\item the following \enquote{triangle comparison inequality}
				\begin{equation*}
					d_X(P,Q_t)^2\leq (1-t)d_X(P,Q)^2+td_X(P,R)^2-t(1-t)d_X(Q,R)^2,
				\end{equation*}
				holds for every triple of points $P,Q,R\in X$, with $Q_t\in X$ being the point on the geodesic segment joining $Q$ and $R$ such that $d_X(Q,Q_t)=t\,d_X(Q,R)$.
			\end{itemize}
		\end{definition}
		Given an open subset $D$ of a smooth manifold $(\mathcal M,g)$ and an NPC target space $(X,d_X)$, we consider minimizers for the Dirichlet energy associated to maps $u\colon D\to X$, i.e.
		\begin{equation*}
			\int_{D}|\nabla_g u|^2\,\d V_g.
		\end{equation*}
		To be precise, this quantity is classically defined in the case $X\sub \R^N$ (which is considered in \cite{gromov-schoen}), while in the case of a general metric space $X$ we refer to the intrinsic theory developed in \cite{korevaar-schoen-1} (see also \cite{hajlasz}). At this point, we observe the following: the fact that $\mathrm{Sing}(u)=\emptyset$ when the sectional curvature of the target manifold is non-positive does not depend on the smoothness of the manifold, but only on such geometric feature. Indeed, in \cite{gromov-schoen,korevaar-schoen-1} the authors proved the following.
		\begin{theorem}
			Let $D\sub\mathcal{M}$ be an open set, let $(X,d_X)$ be an NPC space and let $u\in H^1(D,X)$ be a minimizer of the Dirichlet energy. Then $u$ is locally Lipschitz continuous. In particular $\mathrm{Sing}(u)=\emptyset$.
		\end{theorem}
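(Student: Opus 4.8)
I would prove this by following the variational argument of Korevaar--Schoen, whose driving mechanism is the \emph{geodesic homotopy towards a point}. Fix $Q\in X$ and, for each $x\in D$, let $t\mapsto\gamma_x(t)$ be the constant-speed geodesic in $X$ from $u(x)$ to $Q$ (unique, since $X$ is NPC). For $\eta\in C_c^\infty(D)$ with $0\le\eta\le1$, the map $u_\eta(x):=\gamma_x(\eta(x))$ again lies in $H^1(D,X)$ and agrees with $u$ on $\{\eta=0\}$, so it is an admissible competitor. The crux is to bound its energy: the CAT$(0)$ inequality, in the form of Reshetnyak's quadrilateral comparison, controls $d_X(\gamma_x(s),\gamma_y(t))$ by the corresponding distance in a Euclidean comparison quadrilateral; passing to the Korevaar--Schoen energy density $|\nabla u|^2$ — the Radon measure obtained by averaging difference quotients, which exists for every finite-energy map — one gets, with $w_Q:=d_X^2(u(\cdot),Q)\in H^1_{\mathrm{loc}}(D)$, the pointwise bound
\begin{equation*}
	|\nabla u_\eta|^2\ \le\ (1-\eta)^2\,|\nabla u|^2\ -\ (1-\eta)\,\nabla w_Q\cdot\nabla\eta\ +\ w_Q\,|\nabla\eta|^2 .
\end{equation*}
Inserting this into $E(u,D)\le E(u_\eta,D)$, taking $\eta=\varepsilon\varphi$ with $\varphi\ge0$ and letting $\varepsilon\to0$, yields the fundamental differential inequality $\Delta w_Q\ge 2\,|\nabla u|^2$ in $\mathcal D'(D)$; in particular each $w_Q$ is weakly subharmonic, the exact metric analogue of $\Delta(u-Q)^2=2|\nabla u|^2$ for real harmonic $u$. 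Choosing instead $\eta=\varphi^2$ with $\varphi$ a cutoff and using $|\nabla w_Q|\le2\,w_Q^{1/2}|\nabla u|$ (the distance is $1$-Lipschitz), the same inequality gives a Caccioppoli-type estimate, schematically $\int_{B_r(x_0)}|\nabla u|^2\le\tfrac{C}{r^2}\int_{B_{2r}(x_0)}w_Q$.

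Next comes interior continuity. For $x_0\in D$ and small $r$ choose a near-minimiser $\bar u_r\in X$ of $Q\mapsto\frac1{|B_r(x_0)|}\int_{B_r(x_0)}w_Q$, and set $\omega(r):=\frac1{|B_r(x_0)|}\int_{B_r(x_0)}w_{\bar u_r}$. Combining the Caccioppoli estimate above with a hole-filling inequality — obtained by comparing $u$ on a dyadic annulus with the competitor frozen to $\bar u_{2r}$ on the inner ball and interpolating geodesically on the annulus — one gets a geometric decay of the localised energy, $\int_{B_r(x_0)}|\nabla u|^2\le C\,r^{d-2+2\alpha}$ for some $\alpha\in(0,1)$, hence by Poincar\'e $\omega(r)\le C\,r^{2\alpha}$; together with the sub-mean value property of the subharmonic functions $w_Q$, the Campanato characterisation then forces $u$ to admit a locally H\"older continuous representative. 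From this point on the value $u(x_0)$ is well defined pointwise.

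The last step upgrades H\"older to Lipschitz through the Korevaar--Schoen monotonicity formula. Put $\mathsf E(x_0,r):=\int_{B_r(x_0)}|\nabla u|^2$ and $\mathsf I(x_0,r):=\int_{\partial B_r(x_0)}d_X^2(u,u(x_0))\dH^{d-1}$, and define the \emph{order} $\mathrm{ord}_u(x_0):=\lim_{r\to0}\tfrac{r\,\mathsf E(x_0,r)}{\mathsf I(x_0,r)}$. Differentiating $\mathsf E$ and $\mathsf I$ in $r$, and using on one side the inner (domain) variation of $E$ and on the other the target homothety $x\mapsto\gamma_x^{x_0}(\lambda)$ (the geodesic from $u(x_0)$ to $u(x)$ at parameter $\lambda$, admissible with controlled energy again by the CAT$(0)$ inequality), one proves that $r\mapsto r\,\mathsf E(x_0,r)/\mathsf I(x_0,r)$ is monotone non-decreasing and that $\mathrm{ord}_u(x_0)\ge1$ everywhere (equality being the generic, ``affine'' behaviour). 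Combining this with the monotonicity of the normalised energy $r\mapsto r^{2-d}\mathsf E(x_0,r)$, one obtains the scale-invariant bound $\int_{B_r(x_0)}|\nabla u|^2\le C\,r^{d-2}$ and then, with the subharmonicity of $w_{u(x_0)}$, the interior gradient estimate
\begin{equation*}
	\mathrm{Lip}\big(u;B_{r/2}(x_0)\big)\ \le\ \frac{C}{r}\left(\frac1{|B_r(x_0)|}\int_{B_r(x_0)}d_X^2\big(u,u(x_0)\big)\,\dx\right)^{1/2},
\end{equation*}
i.e.\ the asserted local Lipschitz continuity. Since for a singular target the notion of regularity is precisely continuity, $\mathrm{Sing}(u)=\emptyset$ follows at once.

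The main obstacle is that none of the PDE tools available for smooth targets — solving Dirichlet problems, Bochner's formula, Schauder theory — survives on the target side: every competitor must be produced by an explicit geodesic construction and every energy comparison extracted from the CAT$(0)$ quadrilateral inequality. Hence the real work lies in (a) setting up the Korevaar--Schoen energy measure and the trace theory on spheres carefully enough that the differential and boundary estimates above are rigorous, and (b) proving the monotonicity of the frequency-type quotient $r\,\mathsf E(x_0,r)/\mathsf I(x_0,r)$ in this non-smooth setting — the technical heart of \cite{gromov-schoen,korevaar-schoen-1}, and the place where the absence of smoothness is most keenly felt.
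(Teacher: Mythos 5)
The survey states this theorem without proof, referring the reader to \cite{gromov-schoen,korevaar-schoen-1}, so your sketch has to be measured against those arguments rather than against anything in the text. Up to and including the H\"older continuity and the monotonicity of the order function, your outline is a faithful account of that literature: the geodesic homotopy toward a point, the quadrilateral (Reshetnyak) comparison, the resulting inequality $\Delta d_X^2(u,Q)\ge 2|\nabla u|^2$, the Caccioppoli and hole-filling estimates, and the two variations (domain deformation and target homothety) underlying the monotonicity of $r\mapsto r\,\mathsf E(x_0,r)/\mathsf I(x_0,r)$ are exactly the ingredients of the original proofs.

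The gap is in the final upgrade from H\"older to Lipschitz. First, the assertion $\mathrm{ord}_u(x_0)\ge 1$ does not follow from the domain variation and the target homothety alone: those two variations are precisely what produce the monotonicity of the order quotient, and they give no lower bound on its limit. Indeed, for the model target $\Sigma_N$ the survey records the nonexistence of homogeneous solutions of degree $\gamma<1$ as a \emph{consequence} of the Lipschitz continuity, not the reverse; asserting order at least $1$ at every point is essentially equivalent to the estimate you are trying to prove, so as written the step is circular. Second, even granting $\mathrm{ord}_u\ge1$, your quantitative chain falls short: the monotonicity of $r^{2-d}\mathsf E(x_0,r)$ yields $\int_{B_r(x_0)}|\nabla u|^2\le Cr^{d-2}$, which is a Morrey bound with exponent $\alpha=0$, two powers of $r$ away from Lipschitz. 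What order $\ge 1$ actually buys (via $\frac{d}{dr}\log\big(\mathsf I(x_0,r)/r^{d-1}\big)\ge 2\mathsf E/\mathsf I\ge 2/r$) is the height bound $\mathsf I(x_0,r)\le Cr^{d+1}$, which combined with the sub-mean-value property of $d_X^2(u,u(x_0))$ gives $d_X(u(x),u(x_0))\le C|x-x_0|$; but this still presupposes the lower bound on the order. The ingredient that makes the scheme non-circular, and which your sketch omits, is the subharmonicity of $x\mapsto d_X(u(x),v(x))$ for \emph{two} energy minimizers $u,v$ into an NPC target (proved by geodesically interpolating between the two maps and invoking the full two-parameter quadrilateral inequality). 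Applied to the translate $v=u(\cdot+h)$, the mean value inequality for this subharmonic function together with the difference-quotient characterisation of the Korevaar--Schoen energy yields the interior Lipschitz bound directly; only afterwards does $\mathrm{ord}_u\ge1$ follow. You should either insert this two-map comparison lemma or supply an independent proof that the order is bounded below by $1$.
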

		Harmonic maps with singular NPC targets naturally emerge for various reasons, we name a few. First, they emerge as limits sequences of harmonic maps between smooth manifolds, see e.g. \cite{sinaei} and references therein. A remarkable example is the blow-down sequences. Namely, if $u\colon \R^d\to\mathcal{N}$, then  the blow-down rescaling $u_r(x):=r^{\gamma}u(x_0+x/r)$ (for $\gamma>0$ and $x_0\in D$) takes values in $r^\gamma\mathcal{N}:=\{r^\gamma x\colon x\in \mathcal{N}\}$. The space which emerges as a limit as $r\to 0^+$ of $r^\gamma\mathcal{N}$ (to be intended in the pointed Gromov-Hausdorff metric) might be singular. Moreover, if $\mathcal{N}$ has everywhere non-positive sectional curvatures, then the blow-down limit is NPC. Second, they might be a precious tool for studying rigidity questions for discrete groups, which is one of the main motivations for \cite{gromov-schoen} (see also \cite[Section 8]{hardt}). Lastly, and most importantly for our purposes, optimal partition problems. Let us choose
		\begin{equation*}
			X=\Sigma_N:=\{X\in\R^N\colon X_i\geq 0~\text{and } X_i X_j=0~\text{for all }i,j=1,\dots,N,~i\neq j\},
		\end{equation*}
		i.e. we assume $X$ to be the union of the nonnegative part of the coordinate axes of $\R^N$, then we precisely recover the optimal partition problem introduced in \Cref{sec:intr}, with intrinsic distance
		\begin{equation}\label{eq:d_Sigma}
			\mathrm{d}_{\Sigma_N}(X,Y)=\begin{cases}
				|X_i-Y_i|,&\text{if }X_j=Y_j=0~\text{for all }j\neq i, \\
				|X_i|+|Y_j|,&\text{if }X_i,Y_j\neq 0~\text{for some }i\neq j.
			\end{cases}
		\end{equation}
		The reason why we recalled this is that the space $\Sigma_N\sub\R^N$ is a simple example of non-positively curved space; thus, all the results proved in the setting of harmonic maps with values into NPC spaces holds in this case. The literature also in this field is large; without aiming at giving a complete list, besides \cite{gromov-schoen,korevaar-schoen-1} it is worth mentioning the independent work by Jost \cite{jost1}. We also include the lecture notes \cite{jost_lect} and some other contributions by Jost and coauthors \cite{jost2,jost3} (and references therein) and the various works by Mese and coauthors \cite{mese1,mese2,mese3,mese4,mese5} (and references therein).
		
		Finally, we mention the fact that considering $X=\Sigma_N$ as a target is equivalent to consider $X$ as a locally finite metric tree, which is indeed an example of NPC space. In this field, we mention the work by Sun \cite{sun} and references therein.
		

		
		\subsection{Limit of competition-diffusion models}\label{subsec:comp-diff} 
		For an open $D\sub\R^d$, we now consider the functional
		\begin{equation*}
			E_\beta(u,D):=\sum_{i=1}^N\int_D |\nabla u_i|^2\dx+\frac{\beta}{2}\sum_{\substack{i,j=1 \\ i\neq j}}^N \int_D u_i^2 u_j^2\dx,
		\end{equation*}
		defined for $u=(u_1,\dots,u_N)\in H^1(D,\R^N)$, i.e. with not necessarily segregated densities, and let us consider a non-negative minimizer $u_\beta\in H^1(D,\R^N)$ of $E_\beta(\cdot,D)$, that is $u_i\geq 0$ in $D$ for all $i=1,\dots,N$ and
		\begin{equation*}
			E_\beta(u_\beta,D)\leq E_\beta (v,D)\quad\text{for all }v\in H^1(D,\R^N)~\text{such that }u_\beta-v\in H^1_0(D,\R^N).
		\end{equation*}
		Differently from the optimal partition model considered in \Cref{sec:intr}, here we deal with unconstrained minimizers, which then satisfy the following system
		\begin{equation*}
			-\Delta u_{\beta,i}+\beta u_{\beta,i}\sum_{j\neq i} u_{\beta,j}^2=0,\quad\text{in }D~\text{for all }i=1,\dots,N.
		\end{equation*}
		As a consequence, by a bootstrap argument, we have that $u_{\beta,i}\in C^\infty(D)$ and, by the maximum principle, we have that either $u_{\beta,i}\equiv 0$ in $D$ or $u_{\beta,i}>0$ in $D$, for all $i=1,\dots,N$.
		However, by looking at the functional, one can easily see that the term
		\begin{equation*}
			\frac{\beta}{2}\sum_{\substack{i,j=1 \\ i\neq j}}^N \int_D u_i^2 u_j^2\dx,
		\end{equation*}
		penalizes the regions where two different densities coexists i.e.
		\begin{equation*}
			\bigcup_{\substack{i,j=1 \\ i\neq j}}^N \Big(\{u_{\beta,i}>0\}\cap \{u_{\beta,j}>0\}\Big),
		\end{equation*}
		and the penalization grows as $\beta>0$ increases. One might expect a sort of convergence to the segregated model described in \Cref{sec:intr}; indeed, this fits into the framework of $\Gamma$-convergence and one can easily prove the following.
		\begin{proposition}
			$E_\beta(\cdot,D)\overset{\Gamma}{\longrightarrow}E_\infty(\cdot,D)$ as $\beta\to+\infty$, where
			\begin{equation*}
				E_\infty(u,D):=\begin{cases}
					\displaystyle E(u,D) &\text{if }u\in H^1(D,\Sigma_N), \\
					\displaystyle+\infty &\text{otherwise}.
				\end{cases}
			\end{equation*}
		\end{proposition}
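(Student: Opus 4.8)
The plan is to verify directly the two conditions defining $\Gamma$-convergence, with respect to the strong $L^2(D,\R^N)$ topology on the cone of non-negative densities (so that $E_\beta$ is tacitly extended by $+\infty$ off $H^1(D,\R^N)$, as well as off the non-negativity constraint; if $D$ is unbounded one works in $L^2_{\textup{loc}}$ throughout, with no change in the argument). Concretely, one must check: (i) the \emph{liminf inequality} — for every $u$ and every $u_\beta\to u$ in $L^2$ one has $E_\infty(u,D)\le\liminf_{\beta\to+\infty}E_\beta(u_\beta,D)$; and (ii) the existence of a \emph{recovery sequence} — for every $u$ there exists $u_\beta\to u$ in $L^2$ with $\limsup_{\beta\to+\infty}E_\beta(u_\beta,D)\le E_\infty(u,D)$.

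For (i) I would proceed as follows. One may assume $\ell:=\liminf_{\beta\to+\infty}E_\beta(u_\beta,D)<+\infty$ and, after passing to a subsequence realizing the liminf, that $E_\beta(u_\beta,D)\le M$ uniformly in $\beta$. Inspecting the two summands of $E_\beta$ separately, this yields a uniform $H^1$ bound on the Dirichlet part, $\int_D|\nabla u_\beta|^2\dx\le M$, and the decay $\sum_{i\neq j}\int_D u_{\beta,i}^2 u_{\beta,j}^2\dx\le 2M/\beta\to 0$ of the penalization part. From the first bound and $u_\beta\to u$ in $L^2$ one extracts a further subsequence along which $u_\beta\weak u$ in $H^1(D,\R^N)$ and $u_\beta\to u$ a.e.\ in $D$; weak lower semicontinuity of the Dirichlet integral then gives $E(u,D)\le\liminf_{\beta\to+\infty}\int_D|\nabla u_\beta|^2\dx\le\ell$. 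From the decay of the penalization part together with Fatou's lemma (applied along the a.e.\ convergent subsequence) one obtains $\int_D u_i^2 u_j^2\dx=0$ for every $i\neq j$, i.e.\ the components of $u$ are segregated; since they are also non-negative (being a.e.\ limits of non-negative functions), $u\in H^1(D,\Sigma_N)$, and hence $E_\infty(u,D)=E(u,D)\le\ell$.

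For (ii) the recovery sequence is in fact trivial: take $u_\beta:=u$ for every $\beta$. If $E_\infty(u,D)=+\infty$ there is nothing to prove; if $u\in H^1(D,\Sigma_N)$, then for a.e.\ $x\in D$ at most one of the components $u_1(x),\dots,u_N(x)$ is nonzero, so $u_i u_j\equiv 0$ a.e.\ for $i\neq j$, the penalization term vanishes identically, and $E_\beta(u,D)=E(u,D)=E_\infty(u,D)$ for every $\beta$.

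I do not expect any serious obstacle here: the only step with genuine content is the liminf inequality, and there both ingredients — lower semicontinuity of the Dirichlet energy and the passage to the segregation constraint through Fatou — are completely standard. One may even bypass the explicit verification by observing that $\beta\mapsto E_\beta(u,D)$ is non-decreasing, that its pointwise limit is exactly $E_\infty(\cdot,D)$, and that $E_\infty(\cdot,D)$ is itself $L^2$-lower semicontinuous (again by the same argument), so that the $\Gamma$-limit of the monotone family $(E_\beta)_\beta$ coincides with $E_\infty(\cdot,D)$. The genuinely delicate matters — whether, in what sense, and with what rate the minimizers $u_\beta$ converge, and with which regularity of the limit — are of a different nature and are not addressed by this soft statement.
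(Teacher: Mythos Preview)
Your argument is correct and is the standard one; the paper itself omits the proof entirely, stating only that ``one can easily prove the following'' before the proposition. Your explicit restriction to the cone of non-negative densities is in fact necessary (and implicit in the paper's context): without it the liminf inequality would fail at, say, $u=(-\phi,0,\dots,0)$ with $\phi\ge 0$, for which the penalization vanishes but $u\notin H^1(D,\Sigma_N)$.
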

		A standard consequence of the $\Gamma$-convergence is that minimizers converge to minimizers. More precisely, if $u_\beta\in H^1(D,\R^N)$ is a minimizer for $E_\beta(\cdot,D)$ and if $u_\beta\weak u$ weakly in $H^1(D,\R^N)$ as $\beta\to+\infty$, then $u\in H^1(D,\Sigma_N)$ and $u$ is a minimizer for $E(\cdot,D)$.
		
		This point of view was adopted in a series of pioneering papers by Conti-Terracini-Verzini \cite{CVT1,CTV2,CTV3}. Afterwards, a vast literature flourished in this framework, studying various aspects of this \enquote{approximate} model, which has deep connections with other important topics including Bose-Einstein condensates and Schr\"odinger systems, see e.g. \cite{NTTV1,NTTV2,TV,soave,SZ}.

		\subsection{Shape optimization}\label{subsec:optimal}
		
		Minimizers -- or, more precisely \emph{almost-minimizers} -- of $E(\cdot,D)$ acting on $H^1(D,\Sigma_N)$ emerge in the setting of optimal partition of the domain $D$ into $N$ subdomains with the cost function being the sum of the first Dirichlet eigenvalue on each subdomain. Let us be more precise. We consider the family of $N$-partitions of $D$
		\begin{equation*}
			\mathcal{P}_N(D):=\big\{(\Omega_1,\dots,\Omega_N)\colon \Omega_i\sub D~\text{open},~\Omega_i\cap\Omega_j=\emptyset~\text{for }i\neq j\big\},
		\end{equation*}
		and we consider, for $p\in(0,+\infty]$, the following optimal partition problem for $D$ (studied by Helffer, Hoffmann-Ostenhof \& Terracini in \cite{HHOT10} and by Conti, Terracini \& Verzini in \cite{CTVFucick2005})
		\begin{equation}\label{eq:shape_opt_p}
			E_N^p(D):=\inf\left\{ \left(\sum_{i=1}^N\lambda_1(\Omega_i)^p\right)^{\frac{1}{p}}\colon (\Omega_1,\dots \Omega_N)\in\mathcal{P}_N(D)\right\},
		\end{equation}
		where $\lambda_1(\Omega_i)$ stands for the first eigenvalue of the Dirichlet-Laplacian on $\Omega_i$ and, in the limit case $p=+\infty$, the value
		\begin{equation*}
			\left(\sum_{i=1}^N\lambda_1(\Omega_i)^p\right)^{\frac{1}{p}},
		\end{equation*}
		is intended as
		\begin{equation*}
			\max_{i\in\{1,\dots,N\}}\lambda_1(\Omega_i).
		\end{equation*} 
		In words, problem \eqref{eq:shape_opt_p} aims to find an optimal partition $\{\Omega_i\}_{i=1,\dots,N}$ of $D$ with respect to the cost functional
		\begin{equation*}
			\mathcal{P}_N(D) \ni (\Omega_1,\dots,\Omega_N) \mapsto \left(\sum_{i=1}^N \lambda_1(\Omega_i)^p\right)^{\frac{1}{p}},
		\end{equation*}
		given by the $p$-norm of the $N$-vector of the first Dirichlet eigenvalues on $(\Omega_1,\dots,\Omega_N)$: this clearly falls into the category of \emph{shape optimization problems}. For instance, to mention one remarkable example, in the particular case of the $3$-partition, i.e. $N=3$, and $D=B_1$, we have the following conjecture.
		\begin{problem}[Bishop-Friedland-Hayman-Helffer-Hoffmann-Ostenhof-Terracini]\label{conj}
			For $N=3$, prove that the $Y$-configuration, given by
			\begin{equation*}
				(\Omega_1,\Omega_2,\Omega_3)=\Big((\R^{d-2}\times \mathcal{Y}_1)\cap B_1\,,\,(\R^{d-2}\times \mathcal{Y}_2)\cap B_1\,,\,(\R^{d-2}\times \mathcal{Y}_3)\cap B_1\Big)
			\end{equation*}
			where 
			\begin{equation*}
				\mathcal{Y}_i:=\Big\{(r\cos\theta,r\sin\theta)\colon r>0,~2\pi(i-1)/3<\theta<2\pi i/3\Big\},\quad\text{for }i=1,2,3,
			\end{equation*}
			is the unique minimizer of \eqref{eq:shape_opt_p} with $D=B_1$, for any $p\in(0,+\infty]$ (up to rotations).
		\end{problem}
		This conjecture was proved for $p=+\infty$ in dimension $d=3$ by Helffer, Hoffmann-Ostenhof and Terracini in \cite{HHOT10} and in any dimension in \cite{OV2} (see also \cite{ST}), to which we refer for further details on this topic.
		
		These shape optimization problems turn out to fall into a larger class of problems of the kind
		\begin{equation*}
			\inf\Big\{F(\Omega_1,\dots,\Omega_N)\colon (\Omega_1,\dots,\Omega_N)\in\mathcal{P}_N(D)\Big\},
		\end{equation*}
		with $F$ belonging to a certain class of functionals. In this generality, questions of existence of optimal shapes were addressed in \cite{BBH1998} (see also the survey \cite{buttazzo}).

		Let us now analyze the connection with the optimal partition problem introduced in \Cref{def:minimizer}, focusing on the case $p=1$ in \eqref{eq:shape_opt_p}, that is
		\begin{equation}\label{eq:shape_opt}
			E_N(D):=\inf\left\{ \sum_{i=1}^N\lambda_1(\Omega_i)\colon (\Omega_1,\dots \Omega_N)\in\mathcal{P}_N(D)\right\}.
		\end{equation}
		A possible strategy to tackle the shape optimization problem \eqref{eq:shape_opt} is to pass to a relaxed formulation, which we now describe. For a measurable $\omega\sub D$, we let
		\begin{equation*}
			\lambda_1(\omega):=\inf\left\{\frac{\int_\omega |\nabla u|^2\dx}{\int_\omega u^2\dx}\colon u\in H^1_0(D)\setminus\{0\}~\text{is such that }u=0~\text{a.e. in }D\setminus\omega\right\}.
		\end{equation*}
		We then consider the following larger family of $N$-partitions of $D$
		\begin{equation*}
			\mathcal{P}_N^*(D):=\{(\Omega_1,\dots,\Omega_N)\colon \Omega_i\sub D~\text{measurable},~\Omega_i\cap\Omega_j=\emptyset~\text{for }i\neq j\}
		\end{equation*}
		and the corresponding relaxed problem
		\begin{equation}\label{eq:E*}
			E_N^*(D):=\inf\left\{\sum_{i=1}^N \lambda_1(\Omega_i)\colon (\Omega_1,\dots,\Omega_N)\in\mathcal{P}_N^*(D)\right\}.
		\end{equation}
		Trivially, we have that
		\begin{equation*}
			E_N^*(D)\leq E_N(D).
		\end{equation*}
		At this point, we can derive an equivalent formulation in terms of Sobolev functions with values in $\Sigma_N$. More precisely, one can prove the following.
		\begin{lemma}
			If we let
			\begin{multline}\label{eq:E**}
				E_N^{**}(D)=\inf\Bigg\{ \sum_{i=1}^N\frac{\int_D |\nabla u_i|^2\dx}{\int_D u_i^2\dx}\colon u\in H^1(D,\Sigma_N)\cap H^1_0(D,\R^N), \\ u_i\not\equiv 0~\text{for any }i\in\{1,\dots,N\} \Bigg\},
			\end{multline}
			then we have $E_N^*(D)=E_N^{**}(D)$. Moreover, if $u\in H^1(D,\Sigma_N)\cap H^1_0(D,\R^N)$ achieves \eqref{eq:E**}, then $(\Omega_1,\dots,\Omega_N)\in\mathcal{P}_N^*(D)$ achieves \eqref{eq:E*}, where
			\begin{equation*}
				\Omega_i:=\{x\in D\colon u_i(x)>0\}.
			\end{equation*}
			Viceversa, if $(\Omega_1,\dots,\Omega_N)\in\mathcal{P}_N^*(D)$ achieves \eqref{eq:E*}, then $u=(u_1,\dots,u_N)\in H^1(D,\Sigma_N)\cap H^1_0(D,\R^N)$ achieves \eqref{eq:E**}, where $u_i$ denotes the first eigenfunction on $\Omega_i$.
		\end{lemma}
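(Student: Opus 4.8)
The plan is to establish the equality $E_N^*(D)=E_N^{**}(D)$ by proving two one-sided inequalities, and then to read off the statements about minimizers from the equality case. Two elementary remarks will be used throughout: the relaxed eigenvalue $\lambda_1(\omega)$ depends on $\omega$ only up to Lebesgue-null sets (both the side condition ``$u=0$ a.e.\ in $D\setminus\omega$'' and the integrals over $\omega$ are unaffected by modifying $\omega$ on a null set), and replacing a test function $u$ by $|u|$ leaves its Rayleigh quotient unchanged since $|\nabla|u||=|\nabla u|$ a.e.

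First I would prove $E_N^{**}(D)\le E_N^*(D)$. Fix a partition $(\Omega_1,\dots,\Omega_N)\in\mathcal{P}_N^*(D)$, which we may assume satisfies $\lambda_1(\Omega_i)<+\infty$ for all $i$. For each $i$ choose a minimizing sequence $(w_i^k)_k\subset H^1_0(D)$ for $\lambda_1(\Omega_i)$ with $w_i^k=0$ a.e.\ in $D\setminus\Omega_i$; by the second remark we may take $w_i^k\ge 0$. Since the $\Omega_i$ are pairwise disjoint and $w_i^k$ vanishes a.e.\ off $\Omega_i$, the components $w_1^k,\dots,w_N^k$ have a.e.\ disjoint supports, hence $w^k:=(w_1^k,\dots,w_N^k)$ lies in $H^1(D,\Sigma_N)\cap H^1_0(D,\R^N)$ and has no trivial component. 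Thus $w^k$ is admissible in \eqref{eq:E**}, and letting $k\to\infty$ gives $E_N^{**}(D)\le\sum_{i=1}^N\lambda_1(\Omega_i)$; taking the infimum over partitions yields the inequality.

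For the reverse inequality $E_N^*(D)\le E_N^{**}(D)$, take $u=(u_1,\dots,u_N)$ admissible in \eqref{eq:E**} and set $\Omega_i:=\{x\in D\colon u_i(x)>0\}$. Segregation forces each intersection $\Omega_i\cap\Omega_j$ ($i\neq j$) to be null, so after removing from the $\Omega_i$ the null set $\bigcup_{i\neq j}(\Omega_i\cap\Omega_j)$ we obtain an element of $\mathcal{P}_N^*(D)$ without changing any $\lambda_1(\Omega_i)$. Each $u_i\in H^1_0(D)$ is nonnegative, not identically zero, and vanishes a.e.\ off $\Omega_i$, hence competes for $\lambda_1(\Omega_i)$; consequently $\lambda_1(\Omega_i)\le\int_D|\nabla u_i|^2\dx\big/\int_D u_i^2\dx$ for every $i$. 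Summing over $i$ and taking the infimum over $u$ proves the inequality, and therefore $E_N^*(D)=E_N^{**}(D)$.

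It remains to match optimizers. If $u$ achieves \eqref{eq:E**}, the partition $(\Omega_i)$ built above satisfies $E_N^*(D)\le\sum_i\lambda_1(\Omega_i)\le\sum_i\int_D|\nabla u_i|^2\dx\big/\int_D u_i^2\dx=E_N^{**}(D)=E_N^*(D)$, so equality holds throughout and $(\Omega_i)$ achieves \eqref{eq:E*}. Conversely, let $(\Omega_1,\dots,\Omega_N)\in\mathcal{P}_N^*(D)$ achieve \eqref{eq:E*}, so each $\lambda_1(\Omega_i)<+\infty$. I would first show that $\lambda_1(\Omega_i)$ is attained, by the direct method: normalize a minimizing sequence in $L^2$, extract a subsequence converging weakly in $H^1_0(D)$ and strongly in $L^2(D)$, pass to the limit in the constraint ``$=0$ a.e.\ off $\Omega_i$'', and use lower semicontinuity of the Dirichlet integral; a minimizer $u_i$ exists, and we may take $u_i\ge 0$. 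Pairwise disjointness of the $\Omega_i$ then gives $u_iu_j=0$ a.e.\ for $i\neq j$, so $u=(u_1,\dots,u_N)\in H^1(D,\Sigma_N)\cap H^1_0(D,\R^N)$ with all components nontrivial, and $\sum_i\int_D|\nabla u_i|^2\dx\big/\int_D u_i^2\dx=\sum_i\lambda_1(\Omega_i)=E_N^*(D)=E_N^{**}(D)$, i.e.\ $u$ achieves \eqref{eq:E**}. The main obstacle is exactly this last attainment step: it is where compactness of the embedding $H^1_0(D)\hookrightarrow L^2(D)$ enters (and hence where one needs an assumption such as boundedness of $D$); the rest of the argument is bookkeeping with supports together with the null-set invariance of $\lambda_1$.
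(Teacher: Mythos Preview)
The paper is a survey and does not supply a proof of this lemma; it simply states the result and moves on. Your argument is the standard one and is correct: the two inequalities follow from using eigenfunctions (or minimizing sequences) as test functions for $E_N^{**}$ and, conversely, using positivity sets of admissible $u$ as test partitions for $E_N^*$, with the null-set invariance of $\lambda_1$ handling the fact that the sets $\{u_i>0\}$ are only disjoint up to measure zero. You also correctly flag that the ``viceversa'' clause requires attainment of each $\lambda_1(\Omega_i)$, hence compactness of $H^1_0(D)\hookrightarrow L^2(D)$; this is an implicit hypothesis in the shape-optimization setting of Section~2.3 (where $D$ is tacitly bounded, or at least has compact Sobolev embedding), even though the paper's global assumption on $D$ is merely ``open and connected''.
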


		The next step is to prove that if $u=(u_1,\dots,u_N)$ achieves $E_N^{**}(D)$, then $u_i$ is continuous, so that the corresponding partition is open and we have
		\begin{equation*}
			E_N(D)=E_N^*(D)=E_N^{**}(D).
		\end{equation*}
		This path have been successfully pursued in \cite{CTV2003,CTVFucick2005,CL2007,CL2008}. In particular, the following holds (see e.g. \cite{CTVFucick2005}).
		\begin{proposition}
			Let $u\in H^1(D,\Sigma_N)\cap H^1_0(D,\R^N)$ be a minimizer of \eqref{eq:E**}. Then $u$ is locally Lipschitz continuous.
		\end{proposition}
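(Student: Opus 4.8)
The plan is to combine three ingredients: the Euler--Lagrange system satisfied by a minimizer, a global $L^\infty$ bound, and the interior Lipschitz estimate for segregated states. First I would normalize each component so that $\int_D u_i^2\dx=1$; then $\lambda_i:=\int_D|\nabla u_i|^2\dx$ equals the eigenvalue $\lambda_1(\{u_i>0\})$ entering \eqref{eq:E**}, and the denominators there drop out. Expanding the quotient to first order around $u$, the minimality of $u$ says that the first variation of $\sum_i\int_D|\nabla u_i|^2\dx$ is nonnegative under all admissible perturbations, up to the lower-order term involving $\sum_i\lambda_i\int_D u_i^2\dx$. Taking outer variations that only shrink the supports, $u_i\mapsto(u_i-t\varphi)^+$ with $0\le\varphi\in C^\infty_c(D)$ (so the competitor stays in $H^1(D,\Sigma_N)$), one obtains $-\Delta u_i\le\lambda_i u_i$ in $\mathcal D'(D)$ for every $i$; taking variations supported inside $\{u_i>0\}$ one obtains $-\Delta u_i=\lambda_i u_i$ there, hence $u_i\in C^\infty(\{u_i>0\})$ by elliptic regularity. (At this stage $u$ is merely Sobolev and the sets $\{u_i>0\}$ are defined up to null sets, so making these variations admissible requires some --- routine --- care.) The very same expansion also shows that $u$ is an \emph{almost-minimizer} of the Dirichlet energy among $H^1(\cdot,\Sigma_N)$-competitors, with an error controlled by the $L^2$-norm of the perturbation.

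Second, for the $L^\infty$ bound: since the supports are disjoint, $w:=\sum_i u_i\ge0$ lies in $H^1_0(D)$ and satisfies $-\Delta w=\sum_i(-\Delta u_i)\le(\max_i\lambda_i)\,w$ in $\mathcal D'(D)$, so De Giorgi--Nash--Moser iteration (a Brezis--Kato argument) gives $w\in L^\infty(D)$, and therefore $0\le u_i\le\|w\|_{L^\infty(D)}=:M$ for all $i$. In particular $|\Delta u_i|\le\lambda_i M$ on $\{u_i>0\}$ and $u_i+\tfrac{\lambda_i M}{2d}|x-x_0|^2$ is subharmonic near any $x_0$, i.e. the components are \enquote{almost subharmonic}.

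Third --- the crux --- I would prove $\sup_{B_r(x_0)}u_i\le Cr$ for every $x_0\in\mathcal F_D(u)$ and small $r$; once this is known, interior gradient estimates for $-\Delta u_i=\lambda_i u_i$ on balls $B_{r/2}(y)\sub\{u_i>0\}$ on which $u_i\le Cr$ give $|\nabla u_i(y)|\le C$, and local Lipschitz continuity follows. If at $x_0$ only one phase is present (i.e. $u_j=0$ a.e. near $x_0$ for $j\ne i$), then replacing $u_i$ on a small ball $B_\rho(x_0)$ by the solution of $-\Delta v=\lambda_i v$ with the same boundary datum neither increases $\int|\nabla u_i|^2$ nor decreases $\int u_i^2$ (Dirichlet comparison, taking $\rho$ so small that $\lambda_i<\lambda_1(B_\rho)$); minimality then forces $u_i$ to already solve the equation on all of $B_\rho(x_0)$, and, $u_i$ being nonnegative and vanishing at $x_0$, the strong maximum principle gives $u_i\equiv0$ near $x_0$ --- such points are thus harmless. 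Hence we may assume at least two phases accumulate at $x_0$, and I would apply the Alt--Caffarelli--Friedman monotonicity formula to a pair $u_i,u_j$: they are nonnegative, have disjoint supports and satisfy $\Delta u_k\ge-\lambda_k M$, so an almost-monotone version of
\begin{equation*}
\Phi_{ij}(x_0,r)=\frac{1}{r^4}\prod_{k\in\{i,j\}}\int_{B_r(x_0)}\frac{|\nabla u_k|^2}{|x-x_0|^{d-2}}\dx
\end{equation*}
stays bounded as $r\to0^+$ (by its value at a fixed radius plus a controlled error), and this forces the linear growth $\sup_{B_r(x_0)}u_k\le Cr$ for $k=i,j$; running this over all pairs and combining with the interior estimates completes the argument.

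The main obstacle is precisely this last step. The classical ACF formula requires genuinely subharmonic functions, whereas here one only has $\Delta u_k\ge-\lambda_k M$, so one must run an almost-monotone version and control the error terms --- equivalently, set up a blow-up/compactness argument in which the lower-order perturbations $\lambda_k u_k$ disappear in the limit while segregation is preserved, so that ACF applies exactly to the limiting global map. The first two steps, by contrast, are essentially routine. Finally, let me note an alternative to the monotonicity-formula argument: the almost-minimality recorded at the end of the first step places $u$ in the class of almost-minimizers of the Dirichlet energy for $\Sigma_N$-valued maps, for which Lipschitz continuity follows from the regularity theory for (almost-)minimizing harmonic maps into NPC spaces --- the natural extension of the theorem recalled in \Cref{subsec:harmonic}, $\Sigma_N$ being an instance of such a target.
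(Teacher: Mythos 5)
Your overall strategy (Euler--Lagrange inequalities from support-shrinking variations, a global $L^\infty$ bound, linear decay at free boundary points via a perturbed Alt--Caffarelli--Friedman formula, then interior gradient estimates) is exactly the route of the references cited in the survey for this statement (the survey itself gives no proof and points to Conti--Terracini--Verzini and Caffarelli--Lin), and your first two steps are correct. The gap is in the third step, and it is not the obstacle you flag. Even for exactly subharmonic segregated pairs, boundedness of the product $\Phi_{ij}(x_0,r)$ as $r\to 0^+$ does \emph{not} force $\sup_{B_r(x_0)}u_k\le Cr$ for both $k=i,j$: the product only controls one factor in terms of the reciprocal of the other. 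If $u_j$ decays superlinearly at $x_0$, so that its ACF factor tends to $0$, the bounded product is perfectly compatible with the factor of $u_i$ blowing up, e.g.\ with $u_i$ behaving like $|x-x_0|^{1/2}$ (a nonnegative, $\tfrac12$-homogeneous function, harmonic where positive, is subharmonic, so nothing in your hypotheses excludes it). Your one-phase clean-up only treats the case $u_j\equiv 0$ near $x_0$, not the case of nontrivial but degenerate companions; and in your proposed blow-up the degenerate components may simply vanish in the limit, leaving a single-phase profile to which ACF says nothing.

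What closes this in the cited proofs is the \emph{second} extremality condition of \Cref{def:extremality}: $u_i-\sum_{j\neq i}u_j$ is a supersolution (here of $-\Delta w\ge \lambda_i u_i-\sum_{j\neq i}\lambda_j u_j$). This must itself be extracted from minimality by mass-transferring variations --- perturbing the signed function $u_i-\sum_{j\neq i}u_j$ and rebuilding the competitor from its positive and negative parts --- and not only by the support-shrinking variations $u_i\mapsto (u_i-t\varphi)^+$ you use. With it one runs a dichotomy at $x_0\in\mathcal F_D(u)$: either the spherical average $\frac{1}{r^{d-1}}\int_{\partial B_r(x_0)}\sum_{j\neq i}u_j\ds$ is at least $\epsilon$ times that of $u_i$, in which case the (perturbed) ACF bound controls the average of $u_i$ linearly; or the other components are negligible on $\partial B_r(x_0)$, in which case the supersolution $u_i-\sum_{j\neq i}u_j$, vanishing at $x_0$, gives $\frac{1}{r^{d-1}}\int_{\partial B_r(x_0)}u_i\ds\le \epsilon\,\frac{1}{r^{d-1}}\int_{\partial B_r(x_0)}u_i\ds+Cr^2$. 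Either branch yields the linear bound, and the rest of your argument then goes through. The NPC alternative you mention in passing is indeed a legitimate second route (it is the Gromov--Schoen argument recalled in \Cref{subsec:harmonic}), but as written it is an aside rather than a proof: you would still need to verify that the interior Lipschitz estimate survives the almost-minimality error produced by the eigenvalue normalization.
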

		
		Finally, we emphasize that minimizers of  \eqref{eq:E**} can be proved to be almost minimizers for the optimal partition model problem defined in \Cref{def:minimizer}. For the proof we refer e.g. to \cite[Proposition 6.8]{OV1}.
		\begin{lemma}
			Let $u\in H^1(D,\Sigma_N)\cap H^1_0(D,\R^N)$ be a function achieving \eqref{eq:E**} and let $D'\Subset D$. Then, there exists $C=C(D')>0$ such that 
			\begin{equation*}
				E(u,B_r(x_0))\leq (1+C\,r)E(v,B_r(x_0))
			\end{equation*}
			for all $v\in H^1(B_r(x_0),\Sigma_N)$ such that $u-v\in H^1_0(B_r(x_0),\R^N)$ and for all $x_0\in D$ and $r>0$ such that $B_r(x_0)\sub D'$. If $D$ is regular, the result holds for every $D'\sub D$ and $C>0$ depends only on $d,N,D$.
		\end{lemma}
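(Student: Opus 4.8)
The plan is to read off the Euler--Lagrange information carried by a minimizer of \eqref{eq:E**} and convert it into a ball-by-ball comparison of Dirichlet energies. Normalise $u$ so that $\int_D u_i^2=1$ for every $i$; by the lemma above identifying the achievers of \eqref{eq:E**} with optimal partitions, each component $u_i$ vanishes outside $\Omega_i:=\{u_i>0\}$ and is there a first Dirichlet eigenfunction, so that, with $\lambda_i:=\lambda_1(\Omega_i)$, one has on $D$ the distributional identity
\begin{equation*}
-\Delta u_i=\lambda_i u_i-\nu_i,\qquad \nu_i\ge 0 \ \text{ a measure carried by }\partial\Omega_i,
\end{equation*}
the non-positive singular part $-\nu_i$ appearing because $u_i\ge 0$ leaves $\Omega_i$ non-increasingly. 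By the Lipschitz regularity of minimizers of \eqref{eq:E**} recalled above, $|\nabla u_i|$ is locally bounded, which yields the density bound $\nu_i(B_\rho(x))\le C\rho^{d-1}$ for $B_\rho(x)\sub D'$; moreover $\lambda_i$ and $\|u_i\|_{L^\infty(D')}$ are finite and controlled by $D'$, and by $d,N,D$ alone when $D$ is regular. Finally, since $u_i$ is a bounded eigenfunction with $\int_D u_i^2=1$ and is uniformly Lipschitz, each $\Omega_i$ contains a ball of a fixed radius $\rho_0>0$; being an optimal partition, the $\Omega_i$ may moreover be taken connected and to cover $D$ up to a null set.

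It is enough to prove the inequality when $v$ minimizes $E(\cdot,B_r)$, $B_r:=B_r(x_0)$, among segregated functions with datum $u$ on $\partial B_r$: such a $v$ exists, it minimizes over all admissible competitors, and — being a local minimizer of $E$ — it is Lipschitz in $B_r$ with constant $\le C\,\mathrm{Lip}(u;D')$, hence continuous. For such $v$ I claim that, for $r\le\rho_0$, $\|v_i\|_{L^\infty(B_r)}\le C r$ whenever $\partial\Omega_i\cap B_r\ne\emptyset$. Indeed, near any free-boundary point the complement of $\Omega_i$ is, up to a null set, occupied by the other (connected, macroscopic) phases, so if $B_r$ meets $\partial\Omega_i$ then some $\Omega_j$ with $j\ne i$ meets $\partial B_r$; there $v_j=u_j>0$, hence $v_i=0$ by the constraint $v\in\Sigma_N$, and the uniform Lipschitz bound propagates this to $v_i\le C\,\mathrm{Lip}(u;D')\,r$ on $\overline{B_r}$.

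Fix now $B_r\sub D'$ and such a $v$. Testing $-\Delta u_i=\lambda_i u_i-\nu_i$ against $u_i-v_i\in H^1_0(B_r)$ (licit since $-\Delta u_i\in H^{-1}(B_r)$ and $u_i-v_i$ is continuous), and using $u_i=0$ on $\supp\nu_i$ together with $u_i v_i\ge 0$, one obtains
\begin{equation*}
\int_{B_r}|\nabla u_i|^2\dx=\int_{B_r}\nabla u_i\cdot\nabla v_i\dx+\lambda_i\int_{B_r}(u_i-v_i)u_i\dx+\int_{B_r}v_i\,\d\nu_i .
\end{equation*}
The middle term is at most $\lambda_i\|u_i\|_{L^\infty(D')}^2|B_r|$, and the last is at most $\|v_i\|_{L^\infty(B_r)}\,\nu_i(B_r)\le C r^d$ (it vanishes when $\partial\Omega_i\cap B_r=\emptyset$, and for $r\ge\rho_0$ one simply uses $\nu_i(B_r)\lesssim r^d$). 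Estimating $\int_{B_r}\nabla u_i\cdot\nabla v_i$ by Cauchy--Schwarz and absorbing $\tfrac12\int_{B_r}|\nabla u_i|^2$ gives $\int_{B_r}|\nabla u_i|^2\le\int_{B_r}|\nabla v_i|^2+C r^d$; summing over $i$ and recalling $E(v,B_r)\le E(w,B_r)$ for every admissible $w$ yields $E(u,B_r)\le E(v,B_r)+C r^d$, with $C=C(D')$ (resp. $C=C(d,N,D)$ for regular $D$) — which is the asserted almost-minimality, recorded above in multiplicative form.

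The main obstacle is the free-boundary reaction term $\int_{B_r}v_i\,\d\nu_i$: from $\nu_i(B_r)\lesssim r^{d-1}$ alone its naive estimate is of the borderline size $r^{d-1}$, which would be useless; gaining the extra factor $r$ — hence the whole statement — rests on the bound $\|v_i\|_{L^\infty(B_r)}\lesssim r$ near the free boundary, and it is precisely here that the segregation constraint $v\in\Sigma_N$, the uniform Lipschitz estimate for the minimizing competitor, and the non-degeneracy and connectedness of the phases in an optimal partition are genuinely used. Everything else — existence and Lipschitz regularity of the competitor, the density bound for $\nu_i$, and the integration by parts — is routine.
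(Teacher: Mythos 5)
Your scheme is essentially sound up to its last line, but the last line is a genuine gap, and it is located exactly where the difficulty of the statement sits. What you actually prove is the additive inequality $E(u,B_r)\le E(v,B_r)+Cr^d$; passing from this to the multiplicative form $E(u,B_r)\le(1+Cr)E(v,B_r)$ would require a lower bound $E(v,B_r)\ge c\,r^{d-1}$, which is not available and is in fact false. Two concrete failure modes: (i) at an interior critical point of an eigenfunction (say the maximum of $u_1$ in $\Omega_1$), both $E(u,B_r)$ and $E(v,B_r)$ for the harmonic replacement are of order $r^{d+2}$, so an additive $Cr^d$ is useless there (and a short Taylor expansion shows that the ratio $E(u,B_r)/E(v,B_r)$ tends to a constant strictly larger than $1$, so the purely multiplicative inequality cannot hold without a remainder — the cited Proposition 6.8 of \cite{OV1} indeed carries an additive term); (ii) at free-boundary points, where $E(v,B_r)\sim r^d$, your error $Cr^d$ only yields $E(u,B_r)\le(1+C)E(v,B_r)$, i.e.\ quasi-minimality with a fixed constant rather than almost-minimality. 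The loss in (ii) is intrinsic to your decomposition: the reaction term $\int_{B_r}v_i\,\d\nu_i$ is genuinely of order $\|v_i\|_{L^\infty}\,\nu_i(B_r)\sim r\cdot r^{d-1}=r^d$ for a competitor that displaces the free boundary, so no refinement of your estimates for it can produce the extra factor of $r$.

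The argument that does give the sharp error avoids the Euler--Lagrange measure altogether: extend $v$ by $u$ outside $B_r$ and plug the resulting function directly into the Rayleigh-quotient functional \eqref{eq:E**}. With the normalization $\int_D u_i^2=1$, the only error comes from the denominators, and $\big|\int_{B_r}(v_i^2-u_i^2)\,\dx\big|\le C r^{1+d/2}\big(\int_{B_r}|\nabla u_i|^2+|\nabla v_i|^2\,\dx\big)^{1/2}$ by Poincar\'e on $H^1_0(B_r)$ plus the $L^\infty$ bounds; expanding $(1+\delta_i)^{-1}$ and applying Young's inequality with parameter $r$ gives $E(u,B_r)\le(1+Cr)E(v,B_r)+Cr^{d+1}$, which combined with the nondegeneracy $E(v,B_r)\gtrsim r^d$ on balls meeting the free boundary is what the regularity theory needs. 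Two smaller remarks: your paragraph establishing $\|v_i\|_{L^\infty(B_r)}\le Cr$ leans on unproved facts (exhaustiveness and connectedness of the optimal partition, Lipschitz regularity of the replaced minimizer up to $\partial B_r$) that can be bypassed in one line — each $v_i$ is subharmonic, so $\sup_{B_r}v_i\le\sup_{\partial B_r}u_i\le 2\,\mathrm{Lip}(u;D')\,r$ as soon as $u_i$ vanishes somewhere in $\overline{B_r}$, which is automatic when $\partial\Omega_i\cap B_r\ne\emptyset$; and the reduction to the energy-minimizing competitor should be stated before, not after, the claim you are reducing, since for a general $v$ your $L^\infty$ and Lipschitz bounds do not hold.
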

		
		\subsection{Symmetric 2-valued harmonic functions}\label{subsec:2-val}
		We start by briefly introducing the notion of a $q$-valued function, for more details we refer to \cite{de-lellis-spadaro-Memoirs,almgren} and the references therein.
		For any integer $q\ge 2$, we denote by $\mathcal A_q(\R)$ the set of all {\it unordered} $q$-uples of $\R$ and we will use the notation $a=\{a_1,\dots,a_q\}$ for an element $a$ of $\mathcal A_q(\R)$.
		Let $D$ be an open subset of $\R^d$. A function $\varphi:D\to \mathcal A_q(\R)$ is called a \emph{$q$-valued function}. Given a $q$-valued function $\varphi:D\to \mathcal A_q(\R)$ we will say that $\varphi\in C^{0,\alpha}$ if there is $C>0$ such that
		$$\text{dist}(\varphi(x),\varphi(y))\le C|x-y|^\alpha\quad\text{for all}\quad x,y\in D,$$
		where $\text{dist}(a,b)$ is the Wasserstein distance between $a=\{a_1,\dots,a_q\}$ and $b=\{b_1,\dots,b_q\}$, i.e.
		$$\text{dist}(a,b)=\inf_{\mathcal P}\sum_{i=1}^q|a_i-b_{\mathcal P(i)}|,$$
		the infimum being taken over all permutations $\mathcal P$ of $\{1,\dots,q\}$. Given a $q$-valued function $\varphi:D\to\mathcal A_q(\R)$, we say that $\varphi$ is differentiable in $D$, if for each $x\in D$ there is a $q$-valued affine function 
		$$L_x:\R^d\to\mathcal A_q(\R^d)\ ,\quad L_x(h)=\Big\{\varphi_1(x)+v_1(x)\cdot h\,,\dots,\,\varphi_q(x)+v_q(x)\cdot h\Big\},$$
		with $v_j(x)\in \R^d$ (single-valued), such that 
		$$\lim_{h\to0}\frac{1}{|h|}\text{dist}\left(\varphi(x),L_x(h)\right)=0.$$
		We will also write $\nabla \varphi:D\to\mathcal A_q(\R^d)$ to be the multivalued function 
		$$\nabla \varphi(x)=\{v_1(x),\dots,v_q(x)\}.$$
		Finally, we will say that say that $\varphi\in C^{1,\alpha}$, if $\varphi$ is differentiable and $\nabla \varphi\in C^{0,\alpha}$.  
		
		\vspace{0.2cm}
		
		\noindent We next recall the following definition of a harmonic $2$-valued function from \cite{simon-wickramasekera}. 
		
		\begin{definition}[Symmetric $2$-valued harmonic functions]\label{def:harmonic-2-valued}
			We will say that $\varphi:D\to\mathcal A_2(\R)$ is a \emph{symmetric $2$-valued map} if each $\varphi(x)$ is of the form $\{a,-a\}$ for some $a\in\R$. For a symmetric two-valued map $\varphi$, we denote by $\mathcal K_\varphi$ the set 
			$$\mathcal K_\varphi=\Big\{x\in D\ :\ \varphi(x)=\{0,0\}\ \text{and}\ \nabla \varphi(x)=\{0,0\}\Big\}.$$
			Moreover, we will say that a symmetric $2$-valued map $\varphi\in C^{1,\alpha}$ is \emph{harmonic} if for each ball $B\subset D\setminus\mathcal K_\varphi$ there is a harmonic function $h:B\to\R$ such that $\varphi\equiv\{h,-h\}$ in $B$.
		\end{definition}
		
		A typical example of a symmetric $2$-valued harmonic map is $$\varphi:\R^2\to\R\ ,\quad \varphi(z)=\Re\left(z^{\sfrac32}\right),$$
		for which $\mathcal K_u$ is the origin and the nodal set $\{\varphi=0\}$ divides the plane into three disjoint cones $\Omega_1,\Omega_2,\Omega_3$ with 120 degree angles at the origin. If we denote by $u_j$, $j=1,2,3$, the function 
		$$u_j(z)=\begin{cases}
			\vert\Re\Big(z^{\sfrac32}\Big)\vert,\ &\text{ if }\ z\in \Omega_j,\\
			0,\ &\text{if }\ z\notin \Omega_j,
		\end{cases}
		$$
		then the map 
		$$u:\R^2\to\Sigma_3\ ,\quad u=(u_1,u_2,u_3),$$
		is critical for the Dirichlet energy $E$ 
		(in the sense of \cref{def:extremality} below). It is immediate to check that this is true in general. Indeed, we have the following proposition. 
		
		\begin{proposition}
			Suppose that $D$ is an open subset of $\R^d$ and that $\varphi:D\to\mathcal A_2(\R)$ is a $C^{1,\alpha}$ two-valued symmetric harmonic map with nodal set $\{\varphi=\{0,0\}\}$ that divides $D$ into $N$ distinct disjoint connected open sets $\Omega_1,\dots,\Omega_N$. 
			Let 
			$$u=(u_1,\dots,u_N):D\to\Sigma_N\ ,$$
			be the segregated map defined as follows:
			$$u_j(x)=\begin{cases}
				|\varphi(x)|\ ,&\text{if }\ x\in \Omega_j,\\
				0\ ,&\text{if }\ x\notin \Omega_j.
			\end{cases}$$
			Then, $u$ is critical ($u\in \mathcal S(D,N)$) is the sense of \cref{def:extremality}.
		\end{proposition}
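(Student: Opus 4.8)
The plan is to verify the extremality conditions of \Cref{def:extremality} directly, namely that each component $u_i$ is subharmonic in $D$ (and harmonic in its positivity set $\{u_i>0\}$) while $\hat u_i:=u_i-\sum_{j\neq i}u_j$ is superharmonic in $D$. Two preliminary remarks streamline everything. First, since $\varphi\in C^{1,\alpha}$ the function $|\varphi|$ is locally Lipschitz (the map $a\mapsto|a|$ is $1$-Lipschitz from $\mathcal A_2(\R)$, with the Wasserstein distance, to $\R$), and, since the nodal set $\mathcal Z:=\{\varphi=\{0,0\}\}$ contains $\partial\Omega_j\cap D$ for each $j$ (the $\Omega_j$ being the connected components of the open set $D\setminus\mathcal Z$) and $|\varphi|$ vanishes there, each $u_j$ is locally Lipschitz on $D$; in particular $u\in H^1_{\mathrm{loc}}(D,\Sigma_N)$ with segregated components. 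Second, on $D$ one has the identity $\sum_{j=1}^N u_j=|\varphi|$, so that $\hat u_i=2u_i-|\varphi|$.

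I would then use \Cref{def:harmonic-2-valued} to read off the behaviour on the positivity sets. Any ball $B\Subset\Omega_j$ lies in $D\setminus\mathcal K_\varphi$, hence $\varphi\equiv\{h,-h\}$ on $B$ for some harmonic $h$; since $\varphi\neq\{0,0\}$ throughout $\Omega_j$, the function $h$ has no zeros in the connected set $B$, whence $u_j=|h|=\pm h$ is harmonic there. Thus $u_j$ is harmonic in $\Omega_j=\{u_j>0\}$ and, being moreover nonnegative and continuous, it is subharmonic in $D$ (by the standard comparison with its harmonic replacement on small balls). Summing over $j$ shows that $|\varphi|$ is subharmonic as well, so $\Delta u_i$ and $\Delta|\varphi|$ are nonnegative Radon measures, both concentrated on $\mathcal Z$.

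The remaining step is to show $-\Delta\hat u_i=\Delta|\varphi|-2\Delta u_i\ge0$ as a measure. Here I would stratify $\mathcal Z=\mathcal Z^*\cup\mathcal K_\varphi$ with $\mathcal Z^*:=\mathcal Z\setminus\mathcal K_\varphi$: near any point of $\mathcal Z^*$ one has $\varphi\equiv\{h,-h\}$ with $h$ harmonic and $\nabla h\neq0$, so locally $\{h=0\}$ is a smooth hypersurface across which $h$ changes sign, bordering two distinct components $\Omega_j$ and $\Omega_k$. A jump computation of the normal derivatives then gives $\Delta|\varphi|\ge2\Delta u_i$ on $\mathcal Z^*$, with equality precisely on the part $\partial\Omega_i\cap\mathcal Z^*$ of the nodal set bounding $\Omega_i$ — where $\hat u_i$ turns out to be harmonic — and strict inequality on the rest of $\mathcal Z^*$, where $\hat u_i$ coincides with the strictly superharmonic function $-|\varphi|$.

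The main obstacle is the branch set $\mathcal K_\varphi$, which is exactly where $u$ can be expected to be most singular (e.g.\ the $\mathcal Y$-type points). What one needs is that $\Delta u_i$ and $\Delta|\varphi|$ put no mass on $\mathcal K_\varphi$. Since these measures are Laplacians of locally Lipschitz functions, they assign no mass to $\mathcal H^{d-1}$-negligible sets, and $\mathcal K_\varphi$ is $\mathcal H^{d-1}$-negligible — in fact of Hausdorff dimension at most $d-2$, a structural feature of $C^{1,\alpha}$ two-valued harmonic maps, see \cite{simon-wickramasekera}; alternatively, one sees directly that $\mathcal K_\varphi$ cannot contain a piece of hypersurface, for otherwise a component $\Omega_j$ abutting it would support a harmonic function with vanishing Cauchy data along that hypersurface, forcing it to vanish identically. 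Patching the two strata together yields $-\Delta\hat u_i\ge0$ on all of $D$, so that $u\in\mathcal S(D,N)$ in the sense of \Cref{def:extremality}.
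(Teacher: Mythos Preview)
Your argument is correct and follows essentially the same route as the paper's: check the two extremality inequalities away from the branch set $\mathcal K_\varphi$ using the local harmonic selection $h$ and a normal-derivative jump computation across the smooth part $\mathcal Z^\ast$ of the nodal set, then remove $\mathcal K_\varphi$ by appealing to its Hausdorff dimension $\le d-2$ from \cite{simon-wickramasekera}. The only cosmetic difference is in how $\mathcal K_\varphi$ is declared negligible --- you use that the Laplacian of a Lipschitz subharmonic function cannot charge $\mathcal H^{d-1}$-null sets, whereas the paper invokes the $H^2_{\rm loc}$ regularity of $\varphi$ for the same purpose; note, however, that your ``alternative'' unique-continuation remark only rules out $\mathcal K_\varphi$ containing a smooth hypersurface patch, which is strictly weaker than $\mathcal H^{d-1}(\mathcal K_\varphi)=0$, so the citation to \cite{simon-wickramasekera} is really needed.
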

		\begin{proof}
			It is sufficient to show that the components $u_i$, $i=1,\dots,N$, satisfy the inequalities 
			$$\Delta u_i\geq 0\quad\text{and}\quad 
			\Delta \Big(u_i-\sum_{j\neq i}u_j\Big)\leq 0\quad\text{in}\quad D.$$ 
			The first inequality is immediate since $u_i$ is non-negative in $D$ and harmonic in $\Omega_i=\{u_i>0\}$. Since, by \cite{simon-wickramasekera}, $\varphi\in H^2_{loc}(D)$ and the Hausdorff dimension of $\mathcal K_\varphi$ is at most $d-2$, it is sufficient to check that the second inequality holds in the interior of $D\setminus \mathcal K_\varphi$. Now, if $x_0$ lies in some $\Omega_k$, then $\Delta \Big(u_i-\sum_{j\neq i}u_j\Big)=0$. Suppose that $x_0$ lies on the nodal set $\{\varphi=0\}$. Since $x_0\notin\mathcal K_\varphi$, in a neighborhood of $x_0$ $\{\varphi=0\}$ is a smooth interface between two of the sets $\Omega_k$ and $\Omega_\ell$. But then, by a simple integration by parts we get that $\Delta \Big(u_i-\sum_{j\neq i}u_j\Big)\le 0$ in distributional sense around $x_0$. 
		\end{proof}
		
		We next show that also the converse is true, that is, every map in the class $\mathcal S$ (defined later in \Cref{def:extremality}) gives rise to a symmetric two-valued harmonic function. 
		
		\begin{proposition}
			Let $N\in\N$, $D\in\R^d$ be an open set, and let $u=(u_1,\dots,u_N)\in \mathcal S(D,N)$. Let $\Omega_j:=\{u_j>0\}$ for $j=1,\dots,N$. Let $\varphi:D\to\mathcal A_2(\R)$ be the symmetric two-valued function defined as
			$$\varphi(x):=\begin{cases}
				\{u_j(x),-u_j(x)\},&\text{if}\quad x\in\Omega_j\quad\text{for some}\quad j=1,\dots,N;\\
				\{0,0\},&\text{if}\quad x\in D\setminus \left(\bigcup_{j=1}^N\Omega_j\right).
			\end{cases}$$
			Then, $\varphi\in C^{1,1/2}$ and $\varphi$ is harmonic in the sense of \cref{def:harmonic-2-valued}.
		\end{proposition}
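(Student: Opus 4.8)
The plan is to identify $\varphi$ with a single scalar function, transport to it the distributional information coming from $u\in\mathcal S(D,N)$, and then analyse separately the open phases, the regular part of the nodal set, and its singular part. For the \emph{reduction}: since the components are non-negative and segregated, at each point at most one $u_j$ is positive, so the scalar function $v:=\sum_{j=1}^N u_j=\max_j u_j$ satisfies $v=u_j$ on $\Omega_j$ and $v\equiv 0$ on $D\setminus\bigcup_j\Omega_j$, hence $\varphi(x)=\{v(x),-v(x)\}$ for every $x$ and $\mathrm{dist}\big(\varphi(x),\varphi(y)\big)=2|v(x)-v(y)|$. Membership in $\mathcal S(D,N)$ yields (cf.\ \cref{def:extremality} and the previous proof) $\Delta u_j\ge 0$ and $\Delta\big(u_j-\sum_{k\ne j}u_k\big)\le 0$ in $D$ for every $j$; in particular each $u_j$, hence $v$, is subharmonic, and by the interior regularity theory recalled in \Cref{sec:known_results} the $u_j$ and $v$ are locally Lipschitz, so already $\varphi\in C^{0,1}_{\mathrm{loc}}(D)$.

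Next I would set $\Gamma:=D\setminus\bigcup_j\Omega_j$ and invoke the free boundary regularity theory for maps in $\mathcal S(D,N)$: $\Gamma=\mathcal R\sqcup\Sigma$, where $\mathcal R$ is relatively open and every point of it has a neighbourhood in which exactly two phases are present (with Almgren frequency $1$ there), while $\Sigma$ is closed, $\dim_{\mathcal H}\Sigma\le d-2$, and at each point of $\Sigma$ the frequency is $\ge 3/2$ with non-flat blow-ups. The two easy cases are then: if $x_0\in\Omega_j$, on a ball $B\Subset\Omega_j$ the other components vanish, so $\Delta u_j\ge 0$ and $\Delta u_j=\Delta(u_j-\sum_{k\ne j}u_k)\le 0$, i.e.\ $u_j$ is harmonic on $B$ and $\varphi\equiv\{u_j,-u_j\}$; and if $x_0\in\mathcal R$ with only $\Omega_i,\Omega_j$ present near $x_0$, then $\sum_{k\ne i}u_k=u_j$ on a ball $B\ni x_0$, so the $i$-th and $j$-th inequalities give $\Delta(u_i-u_j)\le 0$ and $\Delta(u_j-u_i)\le 0$ on $B$, whence $w:=u_i-u_j$ is harmonic on $B$; since $u_iu_j\equiv 0$ and $u_i,u_j\ge 0$ one gets $|w|=u_i+u_j=v$ on $B$, hence $\varphi\equiv\{w,-w\}$ there. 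In both cases $\varphi$ is locally of the form $\{h,-h\}$ with $h$ harmonic, so it is $C^{1,\alpha}$ for every $\alpha$ on such balls, and $\nabla\varphi(x_0)\ne\{0,0\}$ wherever the relevant $\nabla h(x_0)\ne0$ — in particular at every regular point, where $w\not\equiv0$ has frequency $1$; so $\mathcal R\cap\mathcal K_\varphi=\emptyset$.

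For a point $x_0\in\Sigma$, the monotonicity of Almgren's frequency (which at $x_0$ is $\ge 3/2$) together with the subharmonicity of $v$ yields the growth bound $v(x)\le C|x-x_0|^{3/2}$ near $x_0$, so $\mathrm{dist}(\varphi(x),\{0,0\})=2v(x)=o(|x-x_0|)$ and $\varphi$ is differentiable at $x_0$ with $\nabla\varphi(x_0)=\{0,0\}$, i.e.\ $x_0\in\mathcal K_\varphi$. Combining the three cases, $\mathcal K_\varphi=\Sigma$ and $D\setminus\mathcal K_\varphi=\bigcup_j\Omega_j\cup\mathcal R$, on which $\varphi$ is locally $\{h,-h\}$ with $h$ harmonic. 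Given any ball $B\subset D\setminus\mathcal K_\varphi$, $B$ is simply connected and its branch set $\mathcal K_\varphi\cap B$ is empty, so by a standard monodromy argument the local branches patch into a single harmonic $h:B\to\R$ with $\varphi\equiv\{h,-h\}$; this is precisely harmonicity in the sense of \Cref{def:harmonic-2-valued}.

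It remains to promote the interior $C^{1,\alpha}$ bounds to $\varphi\in C^{1,1/2}$ up to $\Sigma=\mathcal K_\varphi$, and this is the main obstacle. On a ball of radius $\rho(x):=\mathrm{dist}(x,\Sigma)$ around a point $x$ near $\Sigma$ the selected harmonic branch obeys $\sup|h|\le C\rho(x)^{3/2}$ by the growth bound, so interior estimates for harmonic functions give $|\nabla\varphi(x)|\le C\rho(x)^{1/2}$ and $|\nabla^2\varphi(x)|\le C\rho(x)^{-1/2}$; since moreover $\nabla\varphi\equiv\{0,0\}$ on $\Sigma$, a telescoping/interpolation argument towards $\Sigma$ (in the spirit of \cite{simon-wickramasekera}) upgrades this to $\nabla\varphi\in C^{0,1/2}_{\mathrm{loc}}(D)$. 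The routine parts are the reduction, the interior and regular-interface computations, and the monodromy patching; the substantive input is the free boundary structure theory for $\mathcal S(D,N)$ (the stratification $\mathcal R\sqcup\Sigma$ with $\dim_{\mathcal H}\Sigma\le d-2$, frequency $1$ on $\mathcal R$, frequency $\ge 3/2$ on $\Sigma$, and the frequency gap); and the hard step is exactly the last one — the sharp exponent $1/2$, forced by the $\tfrac32$-homogeneous $\mathcal Y$-junction $\Re(z^{3/2})$, has to be extracted from that gap via the quantitative estimates above.
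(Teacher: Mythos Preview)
Your proposal is correct and follows essentially the same route as the paper's proof: identify $\mathcal K_\varphi$ with the singular set $\mathcal S_D(u)$, use the frequency gap $\gamma\ge\tfrac32$ at singular points to get the $|x-x_0|^{3/2}$ growth bound, build a single harmonic branch on each ball in $D\setminus\mathcal K_\varphi$, and then extract $|\nabla\varphi|(x)\le C\,\dist(x,\mathcal K_\varphi)^{1/2}$ via interior gradient estimates for the harmonic branch on $B_{\rho(x)}(x)$.

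The only notable stylistic difference is in how the harmonic branch on a ball $B\subset D\setminus\mathcal K_\varphi$ is produced. The paper invokes a combinatorial sign-assignment lemma (from \cite{OV2}): since the nodal set in $B$ is a smooth hypersurface, one can choose $\sigma_j\in\{\pm1\}$ so that $h=\sum_j\sigma_j u_j$ is harmonic on all of $B$ directly. You instead argue locally (two phases at a time, $w=u_i-u_j$) and then patch via a $\mathbb Z/2$ monodromy argument on the simply connected ball. Both work and are essentially equivalent; your version is slightly more self-contained, while the paper's is shorter once the cited lemma is in hand. Your remark that the Hausdorff-dimension bound $\dim_{\mathcal H}\Sigma\le d-2$ is part of the ``substantive input'' is a bit misleading---neither your argument nor the paper's actually uses it here; all that is needed is the frequency gap and smoothness of the regular set. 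Finally, the paper states more tersely that the pointwise bound $|\nabla\varphi|(x)=O(|x-x_0|^{1/2})$ for $x_0\in\mathcal K_\varphi$ \emph{suffices} for $C^{1,1/2}$; your second-derivative bound $|\nabla^2 h|\le C\rho^{-1/2}$ plus a near/far case split makes explicit why that sufficiency holds.
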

		\begin{proof}
			We first notice that the set $\mathcal K_\varphi$ coincides with the singular set $\mathcal S_D(u)$ defined in \cref{sec:interior}. Thus, for every $x_0\in \mathcal K_\varphi$ we have
			\begin{equation}\label{e:first-32-estimate-equivalence}
				|\varphi(x)|=|u|(x)=O(|x-x_0|^{3/2}),\quad\text{as }x\to x_0.
			\end{equation}
			Now, suppose that $B$ is a ball in $D\setminus \mathcal K_\varphi$. Since the nodal set of $u$ is smooth in $B$, thanks to \cite[Lemma 3.3]{OV2}, we can assign to each $u_j$ a sign $\sigma_j\in\{\pm 1\}$ in such a way that the resulting function $h(x)=\sum_{j=1}^N\sigma_ju_j(x)$ is harmonic in $B$ (see \cite[Lemma 3.4]{OV2}). This proves that $\varphi$ is harmonic in the sense of \cref{def:harmonic-2-valued}. In order to prove the $C^{1,1/2}$ regularity of $\varphi$, it is sufficient to prove that for each $x_0\in\mathcal K_\varphi$ we have
			$$|\nabla\varphi|(x)=O(|x-x_0|^{1/2}),\quad\text{as }x\to x_0.$$ 
			Let $x\in D\setminus \mathcal K_\varphi$, let $y$ be the projection of $x$ on $\mathcal K_\varphi$, and let $r=|x-y|$. Let $h$ be a harmonic function in $B_r(x)$ such that $\varphi=\{h,- h\}$. By \eqref{e:first-32-estimate-equivalence} and the gradient estimate we have that 
			$$|\nabla\varphi|(x)=|\nabla h|(x)\le C|x-y|^{1/2}\le C|x-x_0|^{1/2},$$
			where $C$ does not depend on the projection $y$, but only on the distance from $x_0$ to $\partial D$. 
		\end{proof}

		\section{Known results}\label{sec:known_results}
		
		In this section we consider minimizers (and a certain notion of critical points) of the Dirichlet energy, i.e. solutions of the optimal partition problem. We introduce some notation and the main tools needed in the investigation and we recall all the main results (to the best of our knowledge) concerning the regularity of the solution itself and of the free boundary.
		
		\subsection{Preliminaries}
		\subsubsection*{The classes $\mathcal{M}$ and $\mathcal{S}$}
		
		First of all, for the sake of clarity in the exposition, we recall here the notion of local minimizer, already introduced in \Cref{def:minimizer}. 
		\begin{definition}\label{def:minimizer_1}
			Given $N\in\N$, an open set $D\subset\R^d$ and a function $u:D\to\R^N$, we will say that $u$ is admissible, and we will write $u\in\mathcal A(D,N)$, if $u\in H^1_{\textup{loc}}(D,\Sigma_N)$. We will say that an admissible function $u\in\mathcal A(D,N)$ is a \emph{local minimizer}, and we write $u\in\mathcal{M}(D,N)$, if for every open $\Omega\Subset D$ it holds
			\begin{equation*}
				E(u,\Omega)\leq E(v,\Omega)\quad\text{for all }v\in H^1(\Omega,\Sigma_N)~\text{such that }u-v\in H^1_0(\Omega,\R^N),
			\end{equation*}
			where $E(u,\Omega)$ is the Dirichlet energy
			\begin{equation*}
				E(u,\Omega):=\int_\Omega |\nabla u|^2\dx=\sum_{i=1}^N \int_\Omega|\nabla u_i|^2\dx.
			\end{equation*}
		\end{definition}
		
		We now introduce a notion of critical point of the Dirichlet energy acting on segregated Sobolev functions. 
		\begin{definition}\label{def:extremality}
			We say that $u\in \mathcal{A}(D,N)$ belongs to the class $\mathcal{S}(D,N)$ if
			\begin{equation}\label{eq:S}
				\begin{bvp}
					-\Delta u_i &\leq 0,&&\text{in }D, \\
					-\Delta \Big(u_i-\sum_{j\neq i}u_j\Big)&\geq 0, &&\text{in }D,
				\end{bvp}
			\end{equation}
			holds in the sense of distributions, for all $i=1,\dots,N$. These two inequalities are also known as \emph{extremality conditions}.
		\end{definition}
		This notion was first considered in a series of papers by Conti-Terracini-Verzini, see \Cref{subsec:comp-diff}, and we recall that there holds $\mathcal{M}(D,N)\sub\mathcal{S}(D,N)$ (see e.g. \cite[Theorem 5.1]{CTV3}).
		
		Finally, if $u\in \mathcal{M}(\R^d,N)$ (respectively, $u\in\mathcal{S}(\R^d,N)$) is homogeneous of some degree $\gamma>0$, we write $u\in \mathcal{M}_\gamma(\R^d,N)$ (respectively, $u\in\mathcal{S}_\gamma(\R^d,N)$).

		\subsubsection*{Lipschitz continuity}
		In \cite{CTV3} (see also \cite{gromov-schoen,CL2007} for the particular case of minimizers) it is proved that every $u\in \mathcal{S}(D,N)$ is locally Lipschitz continuous, i.e. $u_i\in C^{0,1}_{\textup{loc}}(D)$ for every $i=1,\dots,N$. In particular,  the sets
		\begin{equation*}
			\Omega^u_i:=\{x\in D\colon u_i(x)>0\}
		\end{equation*}
		are open and well-defined and, as a consequence of \eqref{eq:S}, the functions $u_i$ are harmonic in $\Omega_i^u$.
		
		\subsection{Interior regularity}\label{sec:interior}
		
		In the present section, we review the known results concerning the regularity of the free boundary at interior points.

		\subsubsection{Almgren and Weiss monotonicity formulas}\label{subsec:almgren}
		For any non-trivial $u\in\mathcal{S}(D,N)$ one can define the Almgren \emph{frequency function} for any $x_0\in D$ and any $r<\dist(x_0,\partial D)$ as
		\begin{equation*}
			\mathcal{N}(u,x_0,r):=\frac{E(u,x_0,r)}{H(u,x_0,r)},
		\end{equation*}
		where
		\begin{equation*}
			E(u,x_0,r):=\frac{1}{r^{d-2}}E(u,B_r(x_0))=\frac{1}{r^{d-2}}\sum_{i=1}^N\int_{B_r(x_0)}|\nabla u_i|^2\dx
		\end{equation*}
		is the scaled energy and
		\begin{equation*}
			H(u,x_0,r):=\frac{1}{r^{d-1}}\sum_{i=1}^N\int_{\partial B_r(x_0)}u_i^2\ds
		\end{equation*}
		is the scaled height function. It is known that for $u\in\mathcal{S}(D,N)$ which is not identically zero,  the function $r\mapsto \mathcal{N}(u,x_0,r)$
		is monotone non-decreasing and so the \emph{frequency} of $u$ at $x_0$
		\begin{equation*}
			\gamma(u,x_0):=\lim_{r\to 0^+}\mathcal{N}(u,x_0,r)
		\end{equation*}
		is well-defined at any point $x_0\in \mathcal F_D(u)$. Moreover, by standard arguments, the map $x_0\mapsto \gamma(x_0,u)$ is upper-semicontinuous. Finally,  once the expression of $\partial_r\mathcal{N}(u,x_0,r)$ is known, one can easily prove that also the Weiss energy
		\begin{equation*}
			W_\gamma(u,x_0,r):=\frac{H(u,x_0,r)}{r^{2\gamma}}\Big(\mathcal{N}(u,x_0,r)-\gamma\Big)
		\end{equation*}
		is monotone non-decreasing with respect to $r>0$, for any $\gamma\leq \gamma(u,x_0)$.
		
		\vspace{0.2cm}
		
		We remark that the frequency function $\mathcal{N}(u,x_0,r)$ could actually be defined only if $$H(u,x_0,r)>0,$$
		hence the monotonicity of $\mathcal{N}$ is conditional. On the other hand, with classical arguments, one can prove that if $u\in\mathcal{S}(D,N)$ is nontrivial, then $H(u,x_0,r)>0$ for every $x_0$ and $r>0$ such that $B_r(x_0)\sub D$ (see e.g. \cite[Theorem 2.2]{TT}). The proof of the Almgren monotonicity formula can be found, for instance, in \cite{TT} (see \cite{gromov-schoen} for the case of minimizers), while for the Weiss we can refer to \cite{OV1}.
		
		\subsubsection{Almgren blow-up}\label{sub:intro-blow-up} Let $u\in\mathcal{S}(D,N)\setminus\{0\}$, respectively $\mathcal{M}(D,N)\setminus\{0\}$, and let $x_0\in\mathcal F_D(u)$. We define the \emph{Almgren rescalings} as
		\[
		u_{x_0,r}(x):=\frac{u(x_0+rx)}{\sqrt{H(u,x_0,r)}}.
		\]
		with $H$ as in \Cref{subsec:almgren}, so that $H(u_{x_0,r},0,1)=1$. Furthermore, from scaling properties and the monotonicity of the Almgren frequency (see \Cref{subsec:almgren}), we have that
		\begin{equation*}
			\sum_{i=1}^N\int_{B_1}|\nabla (u_{x_0,r})_i|^2\dx=\mathcal{N}(u_{x_0,r},0,1)=\mathcal{N}(u,x_0,r)\leq \mathcal{N}(u,x_0,\dist(x_0,\partial D))
		\end{equation*}
		Hence, for any sequence $r_n\to0$ there is a subsequence $r_{n_k}\to 0$ such that
		\begin{equation*}
			u_{x_0,r_{n_k}}\to U\quad\text{uniformly in $B_1$ and in }H^1(B_1;\R^N),~\text{as }k\to\infty,
		\end{equation*}
		for some $U\in \mathcal{S}_\gamma(\R^d,N)$ ($U\in\mathcal{M}_\gamma(\R^d,N)$, respectively), with $\gamma=\lim_{r\to 0}\mathcal{N}(u,x_0,r)$, such that
		\begin{equation*}
			\sum_{i=1}^N\int_{\partial B_1}|U_i|^2\ds=H(U,0,1)=1.
		\end{equation*}
		Actually, from the monotonicity of the Almgren frequency, one immediately deduces only weak-$H^1$ convergence, but the full result only needs some further technical steps. For the complete proof we refer e.g. to \cite[Proposition 6.13]{OV1} (the arguments originally come from \cite{FFT2012}).
		
		\subsubsection{On the admissible frequencies}\label{subsec:classification}
		
		A crucial point in many problems in regularity theory is to understand whether a certain value $\gamma> 0$ is an \emph{admissible frequency}. In our context of the optimal partition problem, this means to investigate if $\mathcal{S}_\gamma(\R^d,N)\setminus\{0\}\neq \emptyset$ (respectively, $\mathcal{M}_\gamma(\R^d,N)\setminus\{0\}\neq \emptyset$), for some $N\in\N$. Let us review what is known in this topic.
		\begin{itemize}
			\item In view of the Lipschitz continuity of the solutions, one can easily prove that
			\begin{equation*}
				\mathcal{S}_\gamma(\R^d,N)\setminus\{0\}=\emptyset\quad\text{for all }0<\gamma<1,
			\end{equation*}
			see e.g. \cite[Corollary 2.7]{TT}. Moreover, we have that
			$$\mathcal{S}_1(\R^d,N)=\mathcal{S}_1(\R^d,2)=\mathcal{M}_1(\R^d,N)=\mathcal{M}_1(\R^d,2)$$
			and that if $U\in \mathcal{S}_1(\R^d,2)$, then $U$ has the form
			\begin{equation*}
				U_1=x_d^-,\qquad U_2=x_d^+,
			\end{equation*}
			up to rotations, multiplication by a dimensional constant and relabeling of the components' numbering.
			\item In \cite[Lemma 4.2]{ST} it was proved that
			\begin{equation*}
				\mathcal{S}_\gamma(\R^d,N)\setminus\{0\}=\emptyset\quad\text{for all }1<\gamma<\frac{3}{2}.
			\end{equation*}
			Moreover, basing ourselves on similar arguments, in \cite[Proposition 3.5]{OV2} we proved that 
			\begin{equation*}
				\mathcal{S}_{\sfrac{3}{2}}(\R^d,N)=\mathcal{S}_{\sfrac{3}{2}}(\R^d,3)=\mathcal{M}_{\sfrac{3}{2}}(\R^d,N)=\mathcal{M}_{\sfrac{3}{2}}(\R^d,3)
			\end{equation*}
			and that if $U\in \mathcal{S}_{\sfrac{3}{2}}(\R^d,3)$, then $U$ has the form
			$U(x)=Y(x_{d-1},x_d)$ with $2$-dimensional profile 
			defined as
			\begin{equation*}\label{eq:def-Y}
				Y_i(r,\theta)=\begin{cases}
					r^{\frac{3}{2}}\abs{\sin\left(\frac{3}{2}\theta\right)},&\text{for }\frac{2\pi}{3}(i-1)\leq \theta\leq \frac{2\pi}{3}i, \\
					0, &\text{elsewhere},
				\end{cases}
			\end{equation*}
			for $i=1,2,3$, up to rotations, multiplication by a dimensional constant and relabeling of the components' numbering.
			\item In \cite[Lemma 6.5]{OV2} we proved that there exists $\varepsilon_d>0$ such that
			\begin{equation*}
				\mathcal{M}_\gamma(\R^d,N)\setminus\{0\}=\emptyset\quad\text{for all }\frac{3}{2}<\gamma<\frac{3}{2}+\varepsilon_d.
			\end{equation*}
			
			\item If we restrict to dimension $d=2$, a full classification is available (for a deep analysis of the two dimensional case, we refer to \cite{CTV3} and \cite{HHOT09}).
			We know that
			\begin{equation}\label{eq:2D}
				\begin{gathered}
					\mathcal{S}_\gamma(\R^2,N)\setminus\{0\}\neq \emptyset\quad\text{if and only if } \\
					\gamma = \frac{m}{2}~\text{for some }m\in\N~\text{and }\begin{cases}
						2\leq N\leq m,&\text{if $N$ is even}, \\
						3\leq N\leq m,&\text{if $N$ is odd.}
					\end{cases}
				\end{gathered}
			\end{equation}
			In particular, this follows from the existence of the profile
			\begin{equation}\label{eq:2D_U}
				U_i(r,\theta):=\begin{cases}
					r^{\sfrac{m}{2}}\left|\sin\left(\frac{m}{2}\theta\right)\right|,&\text{for }\theta\in \left(\frac{2\pi}{m}(i-1),\frac{2\pi}{m}i\right), \\
					0,&\text{elsewhere},
				\end{cases}
			\end{equation}
			for $i=1,\dots,m$. By working on the labeling of the components of $U=(U_1,\dots,U_m)$, keeping in mind that two distinct connected components of $\{U_i>0\}$  can meet only at the origin, and thus cannot share a common boundary ray, one can easily produce minimal configurations for $N\in\N$ admissible as in \eqref{eq:2D}. Moreover, by exploiting the fact that we are in $2D$, one can also prove that if $U\in\mathcal{M}_{\gamma}(\R^d,N)\setminus\{0\}$ then $\gamma$ and $N$ must satisfy \eqref{eq:2D} and $U$ must be of the form \eqref{eq:2D_U} (up to relabeling of the vector's components). Furthermore, by a simple alternating argument (see e.g. \cite[Lemma 3.4]{OV2}), one can see that
			\begin{equation*}
				\mathcal{S}_{\sfrac{m}{2}}(\R^2,N)=\mathcal{M}_{\sfrac{m}{2}}(\R^2,N)
			\end{equation*}
			for all the admissible $N\in\N$.
			Finally, we observe that, by cylindrical extension in the remaining $d-2$ variables, there holds
			\begin{equation*}
				\emptyset\neq \mathcal{M}_{\sfrac{m}{2}}(\R^d,N)\setminus\{0\} \sub\mathcal{S}_{\sfrac{m}{2}}(\R^d,N)\setminus\{0\},
			\end{equation*}
			for any admissible $N\in\N$.
			\item Finally, spherical harmonics provide examples of minimizers in any dimension. Namely, if $p_m\colon \R^d\to \R$ is a nonzero harmonic polynomial, homogeneous of degree $m\in\N$ (whose restriction to the sphere $\mathbb{S}^{d-1}$ is called \emph{spherical harmonic}), then we can produce a nontrivial element of $\mathcal{M}_m(\R^d,N)$, for some $N\in\N$, as follows. It is known that $\R^d\setminus\{p_m=0\}$ is the union of a finite number $N$ of connected components $\{A_i\}_{i=1,\dots,N}$. Now, for any $i\in\{1,\dots,N\}$, we define 
			\begin{equation*}
				u_i:=\begin{cases}
					|p_m|,&\text{in }A_i, \\
					0,&\text{elsewhere}.
				\end{cases}
			\end{equation*}
			Finally, one can easily verify that $u=(u_1,\dots,u_N)\neq 0$ and that $u\in \mathcal{M}_m(\R^d,N)$.
		\end{itemize}
		
		\subsubsection{Decomposition of the free boundary}
		
		Given $\gamma\ge 1$, we use the following notation for the set of points of frequency $\gamma$     \begin{equation*}
			\mathcal{F}_D^\gamma(u):=\{x\in\mathcal{F}_D(u)\colon \gamma(u,x)=\gamma\}
		\end{equation*}
		(this is well defined in view of the validity of the Almgren's monotonicity formula).
		Hence, equivalently, in view of the blow-up analysis described in \Cref{subsec:almgren}, a value $\gamma\geq 1$ is admissible (in the sense of \Cref{subsec:classification}) if and only if $\mathcal{F}_D^\gamma(u)\neq \emptyset$ for some non-trivial $u\in\mathcal{S}(D,N)$ (respectively, $u\in\mathcal{M}(D,N)$), for some $N\in\N$. As a consequence of the results we just described in \Cref{subsec:classification}, we can split the free boundary as a disjoint union as follows:
		\begin{itemize}
			\item if $u\in\mathcal{S}(D,N)$, then
			$\displaystyle    \mathcal{F}_D(u)=\mathcal{F}_D^1(u)\cup \Bigg[\bigcup_{\gamma\geq \sfrac{3}{2}}\mathcal{F}_D^\gamma(u)\Bigg];$
			\item if $u\in\mathcal{M}(D,N)$, then
			$\displaystyle   \mathcal{F}_D(u)=\mathcal{F}_D^1(u)\cup\mathcal{F}_D^{\sfrac{3}{2}}(u)\cup \Bigg[\bigcup_{\gamma\geq \sfrac{3}{2}+\varepsilon_d}\mathcal{F}_D^\gamma(u)\Bigg].$
		\end{itemize}
		Finally, for reasons that will be clearer later, we also denote
		\begin{equation*}
			\mathcal{R}_D(u):=\mathcal{F}_D^1(u)\quad\text{and}\quad \mathcal{S}_D(u):=\bigcup_{\gamma\geq \sfrac{3}{2}}\mathcal{F}_D^\gamma(u).
		\end{equation*}
		
		\subsubsection{Clean-up}
		
		By a \emph{clean-up} result, we mean a statement of the type:
		\begin{center}
			\it if, in some ball, some components of a (normalized) solution $u\colon D\to \Sigma_N$ are small, \\
			then in a smaller ball these components vanish.
		\end{center}
		To be more precise, we here recall a tailored version of a more general result of Sun \cite{sun}, originally stated in the context of harmonic maps with values into $\R$-trees.
		\begin{theorem}[\cite{sun}, Theorem 1.2]
			Let $u\in\mathcal{M}(D,N)$, let $x_0\in D$ and let $R=\dist(x_0,\partial D)/2$. Then there exists $\delta,\rho>0$, depending on $d$ and $\mathcal{N}(u,x_0,R)$, such that the following holds. If, for some $\mathcal{I}\sub\{1,\dots,N\}$ and some $r\in(0,R)$, we have
			\begin{equation*}
				\sum_{i\in\mathcal{I}}\|(u_{x_0,r})_i\|_{L^\infty(B_1)}\leq \delta,
			\end{equation*}
			then
			\begin{equation*}
				(u_{x_0,r})_i\equiv 0\quad\text{in }B_\rho,~\text{for any }i\in\mathcal{I}.
			\end{equation*}
		\end{theorem}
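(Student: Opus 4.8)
The plan is to argue by contradiction through a normalization and compactness argument, the substantive part being a competitor construction that exploits local minimality.

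\emph{Reduction and compactness.} First I would set $\Lambda:=\mathcal N(u,x_0,R)$ and note that, since $r<R$ and $s\mapsto\mathcal N(u,x_0,s)$ is nondecreasing, the Almgren rescaling $v:=u_{x_0,r}$ lies in $\mathcal M(B_2,N)$ (here $\dist(x_0,\partial D)/r>2$), with $H(v,0,1)=1$ and $\mathcal N(v,0,1)=\mathcal N(u,x_0,r)\le\Lambda$. Since there are finitely many $\mathcal I\subseteq\{1,\dots,N\}$, it is enough to produce, for each $\Lambda$, some $\delta=\delta(d,\Lambda)>0$ for which the conclusion holds with $\rho=\tfrac12$ whenever $v\in\mathcal M(B_2,N)$, $H(v,0,1)=1$, $\mathcal N(v,0,1)\le\Lambda$. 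If this failed there would be $v^k\in\mathcal M(B_2,N)$ with $H(v^k,0,1)=1$, $\mathcal N(v^k,0,1)\le\Lambda$, some fixed $\mathcal I$ (along a subsequence) with $\sum_{i\in\mathcal I}\|v^k_i\|_{L^\infty(B_1)}\le\tfrac1k$, and a fixed $i\in\mathcal I$ with $\Omega^{v^k}_i\cap B_{1/2}\neq\emptyset$. From $E(v^k,B_1)=\mathcal N(v^k,0,1)\le\Lambda$, $H(v^k,0,1)=1$ and a trace inequality the family $\{v^k\}$ is bounded in $H^1(B_1,\R^N)$; since $\mathcal M(B_2,N)\subseteq\mathcal S(B_2,N)$ the $v^k$ are locally Lipschitz with constant controlled by $\|v^k\|_{L^\infty}$ (see \cite{CTV3}), and the subharmonicity of $|v^k|^2$ together with the $H^1$ bound gives a uniform bound for $\|v^k\|_{L^\infty(B_{7/8})}$, hence a uniform Lipschitz bound on $\overline{B_{3/4}}$. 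Passing to a further subsequence, $v^k\to v$ uniformly on $\overline{B_{3/4}}$ and weakly in $H^1(B_1,\R^N)$, so $v\in H^1(B_{3/4},\Sigma_N)$ and $v_i\equiv0$ in $B_{3/4}$ for every $i\in\mathcal I$ (because $v^k_i\to v_i$ uniformly and $\|v^k_i\|_{L^\infty(B_1)}\to0$); one also has $v\in\mathcal M(B_{3/4},N)$ by the argument used for blow-up limits of minimizers (cf.\ \Cref{sub:intro-blow-up}), though this will not be needed.

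\emph{Non-degeneracy of the limit.} Next I would use the differential identity $\partial_r\log H(v^k,0,r)=\tfrac2r\mathcal N(v^k,0,r)$ and $\mathcal N(v^k,0,r)\le\Lambda$ for $r\le1$: integrating on $[s,1]$ gives $H(v^k,0,s)\ge s^{2\Lambda}$ for $s\in(0,1]$, whence $\int_{\partial B_{1/2}}|v^k|^2\,\dS\ge(\tfrac12)^{d-1+2\Lambda}=:c_1(d,\Lambda)>0$. Letting $k\to\infty$ (uniform convergence on $\partial B_{1/2}$) yields $\int_{\partial B_{1/2}}|v|^2\,\dS\ge c_1$, so $v\not\equiv0$; since $v_i\equiv0$ on $\partial B_{1/2}$ for $i\in\mathcal I$, some $v_{j^\ast}$ with $j^\ast\notin\mathcal I$ has $\|v_{j^\ast}\|_{L^\infty(\partial B_{1/2})}\ge c_2(d,N,\Lambda)>0$, and the same holds for $v^k_{j^\ast}$ once $k$ is large. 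Thus the limiting profile is nontrivial and ignores the components indexed by $\mathcal I$.

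\emph{Conclusion via local minimality --- the main obstacle.} Finally I would contradict the assumption that $\Omega^{v^k}_i\cap B_{1/2}\neq\emptyset$ for all $k$, using that a local minimizer cannot keep a uniformly small component active in an inner ball: pinching it off and letting the neighbouring cells flow into the freed region decreases the Dirichlet energy. Concretely, for $k$ large one constructs a competitor $\tilde v^k\in H^1(B_{5/8},\Sigma_N)$ with $\tilde v^k-v^k\in H^1_0(B_{5/8},\R^N)$ by putting $\tilde v^k_i\equiv0$ in $B_{1/2}$ for $i\in\mathcal I$, redistributing $\bigcup_{i\in\mathcal I}\Omega^{v^k}_i\cap B_{1/2}$ among the adjacent cells with $j\notin\mathcal I$ (for instance as the pointwise maximum of the harmonic extensions of the competing $v^k_j$ into that region), and interpolating back to $v^k$ over the annulus $B_{5/8}\setminus B_{1/2}$ by a radial cut-off; an integration by parts bounds the cost of the interpolation by $C(d)\|v^k_i\|_{L^\infty(B_1)}^2=O(k^{-2})$. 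It then remains to show that the Dirichlet energy liberated inside $B_{1/2}$ --- the energy of the removed components together with the relaxation of the neighbouring cells, which by the non-degeneracy above cannot all be infinitesimal --- strictly exceeds this cost, so that $E(\tilde v^k,B_{5/8})<E(v^k,B_{5/8})$, contradicting the minimality of $v^k$ on $B_{5/8}\Subset B_2$. This quantitative energy comparison, carried out uniformly over admissible configurations and crucially relying on the redistribution rather than a bare cut-off (which by itself need not lower the energy), is the genuinely hard step and is exactly the content of \cite[Theorem 1.2]{sun}.
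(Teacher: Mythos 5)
The survey states this result without proof, quoting it directly from \cite[Theorem~1.2]{sun}, so there is no in-paper argument to compare against; I evaluate your proposal on its own terms. The scaffolding is sound: the rescaling $v=u_{x_0,r}\in\mathcal M(B_2,N)$ with $H(v,0,1)=1$ and $\mathcal N(v,0,1)\le\Lambda$, the uniform $L^\infty$ and Lipschitz bounds via subharmonicity of $|v|^2$ and \cite{CTV3}, and the doubling estimate $H(v,0,s)\ge s^{2\Lambda}$ forcing non-triviality of the limit on $\partial B_{1/2}$ are all correct and standard. But the proof has a genuine gap exactly where you flag it, and the gap is not merely an omitted computation. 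First, soft compactness cannot close the argument at all: the desired conclusion, that $v^k_i\equiv 0$ in $B_{1/2}$ for large $k$, is not a closed condition under uniform or $H^1$ convergence (a sequence of nonnegative functions with sup norm $1/k$ converges to $0$ while each has nonempty positivity set), so after extracting the limit you have learned nothing about $\Omega^{v^k}_i\cap B_{1/2}$ and the entire burden falls on a quantitative argument at finite $k$. Second, the quantitative comparison you sketch does not balance: by interior gradient estimates for the harmonic functions $v^k_i$ on $\Omega^{v^k}_i$, the Dirichlet energy of the discarded components in $B_{5/8}$ is itself $O(\delta^2)$, i.e.\ of the \emph{same} order as the cut-off cost, so the ``energy of the removed components'' cannot beat the cost. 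The only available gain is the relaxation of the components $v^k_j$, $j\notin\mathcal I$, which are forced to vanish on $\partial\Omega^{v^k}_i$; to show that this gain dominates $C\delta^2$ one needs (i) the structural fact, via the maximum principle, that every connected component of $\Omega^{v^k}_i$ meeting $B_{1/2}$ must reach $\partial B_2$ (otherwise $v^k_i$ would be harmonic and vanish on its boundary, hence be zero there), and (ii) a capacity-type lower bound quantifying the energy a function of unit size wastes by vanishing on such a tentacle, coupled with a pointwise lower bound for some $v^k_j$ \emph{along} the tentacle. Your non-degeneracy statement --- a lower bound for $\int_{\partial B_{1/2}}|v|^2$ --- gives none of this: near points of high frequency all components are simultaneously small, so the tentacle may well thread through regions where no competitor is bounded below. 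This missing estimate is the theorem, and deferring it to \cite{sun} means the proposal is a reduction, not a proof.

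A further, more minor issue: the pigeonhole over the finitely many subsets $\mathcal I\subseteq\{1,\dots,N\}$ and the constant $c_2(d,N,\Lambda)$ introduce a dependence of $\delta$ on $N$, whereas the statement asserts dependence on $d$ and $\mathcal N(u,x_0,R)$ only. This is repairable --- the hypothesis is on the sum over $\mathcal I$, so one can treat all small components at once, and the number of components carrying a definite fraction of $H(v,0,\sfrac12)$ is bounded in terms of $d$ and $\Lambda$ --- but it should be addressed if the argument is to reproduce the stated dependence of the constants.
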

		
		As a direct consequence, we also get the following.
		\begin{corollary}
			Let $u\in\mathcal{M}(D,N)$ and let $x_0\in D$. If, for some $r_n\to 0$ there holds
			\begin{equation*}
				u_{x_0,r_n}\to U\quad\text{uniformly in }B_1,
			\end{equation*}
			with $U\in\mathcal{M}_{\gamma(u,x_0)}(\R^d,N)$ satisfying $U_i\equiv 0$ for any $i\in\mathcal{I}$ (for some $\mathcal{I}\sub\{1,\dots,N\}$), then there exists $r>0$ such that $u_i\equiv 0$ in $B_r(x_0)$ for any $i\in\mathcal{I}$.
		\end{corollary}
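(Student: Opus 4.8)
The plan is to apply the clean-up theorem directly along the given blow-up sequence, so that the conclusion becomes a formal consequence. First I would fix $R=\dist(x_0,\partial D)/2$ and let $\delta,\rho>0$ be the constants provided by the clean-up theorem applied at $x_0$; the key point is that these depend only on $d$ and on $\mathcal{N}(u,x_0,R)$, hence they are fixed once $u$ and $x_0$ are fixed and, in particular, do not vary with $n$ along the sequence $r_n\to 0$.

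Next I would exploit the hypothesis that $U_i\equiv 0$ for all $i\in\mathcal{I}$ together with the uniform convergence $u_{x_0,r_n}\to U$ in $B_1$: passing to the limit in the sup norm gives $\sum_{i\in\mathcal{I}}\|(u_{x_0,r_n})_i\|_{L^\infty(B_1)}\to\sum_{i\in\mathcal{I}}\|U_i\|_{L^\infty(B_1)}=0$ as $n\to\infty$. Therefore there exists $n$ large enough that simultaneously $r_n\in(0,R)$ and $\sum_{i\in\mathcal{I}}\|(u_{x_0,r_n})_i\|_{L^\infty(B_1)}\leq\delta$. The clean-up theorem then yields $(u_{x_0,r_n})_i\equiv 0$ in $B_\rho$ for every $i\in\mathcal{I}$. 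Finally, unwinding the definition of the Almgren rescaling $u_{x_0,r_n}(x)=u(x_0+r_n x)/\sqrt{H(u,x_0,r_n)}$ — recalling that $H(u,x_0,r_n)>0$ since $u$ is nontrivial — this is equivalent to $u_i\equiv 0$ in $B_{\rho r_n}(x_0)$ for all $i\in\mathcal{I}$, so the claim holds with $r:=\rho r_n$.

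There is no real obstacle here: the only thing that genuinely needs to be checked is that the constants $\delta$ and $\rho$ can be chosen uniformly along the sequence, and this is immediate from the statement of the clean-up theorem, since they depend only on $d$ and $\mathcal{N}(u,x_0,R)$, quantities that are independent of $n$. (One also notes in passing that $\mathcal{I}$ must be a proper subset of $\{1,\dots,N\}$, since $U$ is nontrivial by $H(U,0,1)=1$, but this plays no role in the argument.)
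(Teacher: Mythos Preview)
Your proof is correct and is exactly the argument the paper has in mind: the corollary is stated immediately after the clean-up theorem with the phrase ``As a direct consequence, we also get the following'' and no further details, so your straightforward application of the clean-up theorem along the blow-up sequence, together with the observation that $\delta$ and $\rho$ are independent of $n$, is precisely the intended reasoning.
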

		
		\subsubsection{Points of frequency 1} Let us consider $u\in\mathcal{S}(D,N)$ let $x_0\in \mathcal F^1_D(u)=\mathcal{R}_D(u)$. We recall that, in this case, all the limits of Almgren blow-up sequences are of the form
		\begin{equation*}
			U_1=x_d^-,\qquad U_2=x_d^+,
		\end{equation*}
		up to rotations, multiplication by a dimensional constant and relabeling of the components' numbering. We have that, around $x_0$, the free interface $\mathcal F_D(u)$ is a smooth manifold separating two of the sets $\Omega_i^u$. Precisely, there is a neighborhood $B_r(x_0)$ and two different indices $i,j\in \{1,\dots,N\}$ such that
		\[
		u_k\equiv 0\quad\text{in }B_r(x_0)\quad\text{for every}\quad k\notin\{i,j\},
		\]
		while in $B_r(x_0)$ the function $u_i-u_j$ is harmonic (thanks to the extremality conditions as in \Cref{def:extremality}) and 
		$$|\nabla (u_i-u_j)|(x_0)\neq 0.$$
		In particular, by the implicit function theorem, by choosing $r$ small enough, the interface 
		\begin{equation}\label{e:definition-Gamma-i-j}
			\partial\Omega_i^u\cap\partial\Omega_j^u\cap B_r(x_0)=\{x\in D\colon u_i(x)-u_j(x)=0\}\cap B_r(x_0),
		\end{equation}	
		is smooth $(d-1)$-manifold and  we have
		\[
		\mathcal F_D(u)\cap B_r(x_0)=\partial\Omega_i^u\cap B_r(x_0)=\partial\Omega_j^u\cap B_r(x_0).
		\]
		In particular, 
		\[
		\mathcal F_D(u)\cap B_r(x_0)=\mathcal R_D(u)\cap B_r(x_0).
		\]
		The proof of this fact can be found, for instance, in \cite{gromov-schoen}.
		\subsubsection{Points of frequency $\sfrac{3}{2}$}
		We now move to the study of the lowest stratum $\mathcal{F}_D^{\sfrac{3}{2}}(u)$ of the singular set  $\mathcal{S}_D(u)$. Before stating the results, we remark that the fine analysis of $\mathcal{F}_D^{\sfrac{3}{2}}(u)$ has been pursued, so far, only when $u\in\mathcal{M}(D,N)$ is a minimizer (this has been done in \cite{OV2}), while the corresponding analysis for the broader class  $\mathcal{S}(D,N)$ remains an open problem. Moreover, this is currently the only stratum of the singular set for which a full $\varepsilon$-regularity result is known.
		
		\vspace{0.2cm}
		
		\noindent We recall that, by the classification in \Cref{subsec:classification}, we know that any limit $U$ of Almgren blow-up sequences at points of frequency $\sfrac{3}{2}$ is of the form $U(x)=Y(x_{d-1},x_d)$ with $2$-dimensional profile 
		defined as
		\begin{equation}\label{eq:Y}
			Y_i(r,\theta)=\begin{cases}
				r^{\frac{3}{2}}\abs{\sin\left(\frac{3}{2}\theta\right)},&\text{for }\frac{2\pi}{3}(i-1)\leq \theta\leq \frac{2\pi}{3}i, \\
				0, &\text{elsewhere},
			\end{cases}
		\end{equation}
		for $i=1,2,3$, up to rotations, multiplication by a dimensional constant and relabeling of the components' numbering.
		
		\vspace{0.2cm}
		
		\noindent By upper-semicontinuity of the map $x_0\mapsto\gamma(u,x_0)$, we first notice that the set $\mathcal{F}_D^{\sfrac32}(u)$ is a relatively open subset of $\mathrm{S}_D(u)$. Moreover, in \cite{OV2} we proved, essentially, that near a point of frequency $\sfrac{3}{2}$ the solution is a $C^{1,\alpha}$ deformation of the model triple junction, see \Cref{fig:triple}. More precisely, for any $x_0\in\mathcal{F}_D^{\sfrac{3}{2}}(u)$ there exists $r>0$ and three different indices $i,j,k\in\{1,\dots,N\}$ such that
		\begin{itemize}
			\item $u_\ell\equiv 0$ in $B_r(x_0)$ for every $\ell\notin\{i,j,k\}$;
			\item $\mathcal{F}_D^{\sfrac{3}{2}}(u)\cap B_r(x_0)$ is a $(d-2)$-dimensional $C^{1,\alpha}$-smooth manifold;
			\item $\mathcal{F}_D(u)\cap B_r(x_0)$ is the union of three $(d-1)$-dimensional smooth manifolds (with boundary) $\Gamma_{ij}$, $\Gamma_{jk}$, $\Gamma_{ik}$, where $\Gamma_{h\ell}:=\partial\Omega_h^u\cap\partial\Omega_\ell^u\cap B_r(x_0)$, for $h,\ell\in\{i,j,k\}$;
			\item each pair of such manifolds intersects along $\mathcal{F}_{\sfrac{3}{2}}(u)\cap B_r(x_0)$
			\item $\Gamma_{h\ell}$ is $C^{1,\alpha}$-smooth up to $\mathcal F_{\sfrac32}(u)\cap B_r(x_0)$, for $h,\ell\in\{i,j,k\}$;
			\item the sets $\Gamma_{ij}$, $\Gamma_{jk}$, $\Gamma_{ik}$ form $120$ degree angles at $\mathcal F_{\sfrac32}(u)\cap B_r(x_0)$. 
		\end{itemize}
		
		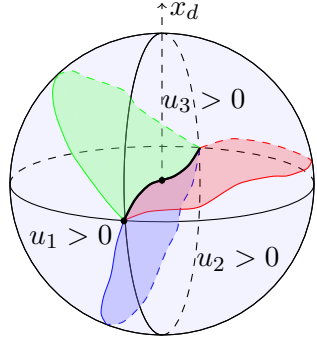
\begin{figure}[h]
			\centering
			\begin{tikzpicture}
				\coordinate (O) at (0,0);
				\filldraw (0,0.05) circle [radius = 1pt];
				\draw[fill=blue, opacity=0.05] (0,0) circle [radius = 20mm];
				\draw (0,0) circle [radius = 20mm];
				
				\draw[dashed, ->] (0,0) -- (0,2.4) node at (0.3, 2.3) {$x_d$};
				\draw[dashed] (0,-2) arc [start angle=-90, end angle = 90,x radius = 5mm, y radius = 20mm];
				\draw (0,2) arc [start angle=90, end angle = 270,x radius = 5mm, y radius = 20mm];
				\draw[dashed,  name path=red_arc_sopra] (-2,0) arc [start angle = 180, end angle = 76, x radius = 20mm, y radius = 5mm];
				\draw[name path=red_arc_sotto] (-2,0) arc [start angle=-180, end angle = -104,x radius = 20mm, y radius = 5mm];
				\draw [name path=g_arc_sotto] (2,0) arc [start angle=0, end angle = -104,x radius = 20mm, y radius = 5mm];
				\draw [dashed, name path=g_arc_sopra] (2,0) arc [start angle=0, end angle = 76,x radius = 20mm, y radius = 5mm];
				\draw [thick, name path=free_boundary] plot [smooth] coordinates {(-0.5,-0.48) (-0.35,-0.2) (-0.15,-0.0) (0.15,0.1) (0.35,0.25) (0.5,0.48)};

					\filldraw (-0.5,-0.49) circle [radius = 1pt];
					\draw [name path=p-blue, color=blue] plot [smooth] coordinates {(-0.5,-0.48) (-0.6,-0.8) (-0.67,-1.2) (-0.77,-1.5)  (-0.8,-1.72) (-0.73,-1.86)};
					\draw [name path=p-blue-dashed, color=blue, dashed] plot [smooth] coordinates {(0.5,0.48) (0.4,0) (0.25,-0.3) (0.2,-0.6)  (0.05,-0.9) (-0.2,-1.5)  (-0.5,-1.82) (-0.73,-1.86)};
				%
					\filldraw (-0.5,-0.48) circle [radius = 1pt];
					\draw [name path=p-green, color=green] plot [smooth] coordinates {(-0.5,-0.48) (-0.75,-0.2) (-1.18,0.4) (-1.39,0.9) (-1.44,1.2)  (-1.41,1.37) (-1.37,1.43)};
					\draw [name path=p-green-dashed, color=green, dashed] plot [smooth] coordinates {(0.5,0.48) (0.3,0.6) (-0.3,1.0) (-0.5,1.24) (-1,1.42) (-1.18,1.5) (-1.35,1.46) (-1.37,1.43)};
				%
					
					\draw [name path=p-red, color=red] plot [smooth] coordinates {(-0.5,-0.48) (-0.2,-0.37) (0.1,-0.34) (0.5,-0.3) (1,0) (1.435,0.1) (1.65,0.11) (1.9,0.2) (1.969,0.3)};
					\draw [name path=p-red-dashed ,color=red, dashed]  plot [smooth] coordinates {(0.5,0.48) (0.9,0.6) (1.3,0.56) (1.6,0.48) (1.9,0.36) (1.969,0.3)};
				\draw (0,0) circle [radius = 20mm];
				\draw (0,2) arc [start angle=90, end angle = 270,x radius = 5mm, y radius = 20mm];
				\begin{scope}[transparency group,opacity=0.2]
					\tikzfillbetween[of=p-blue and p-blue-dashed] {color=blue!50};
					\tikzfillbetween[of=free_boundary and p-green] {color=green!50};
					\tikzfillbetween[of=free_boundary and p-green-dashed] {color=green!50};
					\tikzfillbetween[of=free_boundary and p-blue-dashed] {color=blue!50};
				\end{scope}

				\begin{scope}[transparency group,opacity=0.2]
					\tikzfillbetween[of=p-blue and p-blue-dashed] {color=blue!50};
					\tikzfillbetween[of=p-red and p-red-dashed] {color=red};	
					\tikzfillbetween[of=free_boundary and p-green] {color=green!50};
					\tikzfillbetween[of=free_boundary and p-green-dashed] {color=green!50};
					\tikzfillbetween[of=free_boundary and p-blue-dashed] {color=blue!50};
					\tikzfillbetween[of=p-red and free_boundary] {color=red};	
				\end{scope}
				\filldraw (-0.5,-0.49) circle [radius = 1pt];
				\draw [thick, name path=free_boundary] plot [smooth] coordinates {(-0.5,-0.48) (-0.35,-0.2) (-0.15,-0.0) (0.15,0.1) (0.35,0.25) (0.5,0.48)};
				\filldraw (0,0.05) circle [radius = 1pt];
				\draw node at (-1.2,-0.7) {$u_1>0$};
				\draw node at (1.0,-1.0) {$u_2>0$};
				\draw node at (0.55,1.1) {$u_3>0$};
			\end{tikzpicture}	
			\caption{A picture of a possible minimizer near a point of frequency $\sfrac{3}{2}$.}
			\label{fig:triple}
		\end{figure}
		
		\vspace{0.2cm}
		
		\noindent It is worth mentioning that a crucial role in the proof of these results is the epiperimetric inequality for the $\sfrac{3}{2}$ Weiss energy, again proved in \cite{OV2}, which we here recall.
		\begin{theorem}[Epiperimetric inequality]\label{t:epi}
			There exists $\delta,\eps,\tau\in (0,1)$ depending only on the dimension $d$ such that the following holds. For any $N\in\N$ and any $\sfrac{3}{2}$-homogeneous function $c\in H^1(B_1;\Sigma_N)\cap C^{0,1}(B_1;\R^N)$,
			\begin{equation*}
				\sum_{i=1}^3\d_{\mathcal{H}}\left(\{c_i>0\},\{Y_i>0\}\right)+\sum_{i=4}^N\d_{\mathcal{H}}\left(\{c_i>0\},\{x_{d-1}=x_d=0\}\right)\leq \tau
			\end{equation*}
			and
			\begin{equation*}
				\norm{\mathrm{d}_{\Sigma_N}(c,Y)}_{H^1(B_1)}\leq \delta,
			\end{equation*}
			with $Y$ as in \eqref{eq:Y} and $\d_{\Sigma_N}$ as in \eqref{eq:d_Sigma}, there exists $u\in H^1(B_1;\Sigma_N)\cap C^{0,1}(B_1;\R^N)$ such that 
			\begin{equation*}
				W_{\frac{3}{2}}(u,0,1)\leq (1-\eps)W_{\frac{3}{2}}(c,0,1).
			\end{equation*}
		\end{theorem}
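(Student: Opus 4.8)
The plan is to exhibit, for the given $\tfrac32$-homogeneous competitor $c$, a function $u$ with $u=c$ on $\partial B_1$ and $W_{\frac32}(u,0,1)\le(1-\eps)W_{\frac32}(c,0,1)$ for a dimensional $\eps$; that $u$ agree with $c$ on $\partial B_1$ is the normalization that makes the statement nontrivial, since otherwise $W_{\frac32}(\cdot,0,1)$ is unbounded below on $H^1(B_1;\Sigma_N)$ (take a large constant). The reduction is to a \emph{linear} epiperimetric inequality via the two-valued description of \Cref{subsec:2-val}.

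\emph{Reduction to a linear problem.} Since $c$ is $\tfrac32$-homogeneous, radial integration gives $W_{\frac32}(c,0,1)=\tfrac1{d+1}\mathcal W(g)$ for the trace $g:=c|_{\partial B_1}$, where
\begin{equation*}
	\mathcal W(g):=\int_{\mathbb{S}^{d-1}}\big(|\nabla_{\mathbb{S}^{d-1}}g|^2-\mu_0\,|g|^2\big)\,\dH^{d-1},\qquad \mu_0:=\tfrac32\big(\tfrac32+d-2\big),
\end{equation*}
and the same identity at $Y$ gives $W_{\frac32}(Y,0,1)=0$, because $\psi:=Y|_{\mathbb{S}^{d-1}}$ restricts on each of the three model sectors to a first Dirichlet eigenfunction of eigenvalue exactly $\mu_0$. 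The $\tau$-closeness of the supports $\{c_i>0\}$ to those of $Y$ forces the nodal set of $c$ to have the topology of the model triple junction, so one can fix consistent signs on the three main sectors and --- passing to the branched double cover of $B_1$ along the spine $\{x_{d-1}=x_d=0\}$, equivalently working with the symmetric two-valued map of \Cref{def:harmonic-2-valued} --- encode $(c_1,c_2,c_3)$ as a single-valued homogeneous function $\widetilde c$ that is $H^1$-close to the homogeneous harmonic polynomial $\widetilde\Phi$ lifting $Y$, while the remaining components $c_i$ ($i\ge4$), confined by $\tau$- and $\delta$-closeness to a thin tube about the spine, are carried separately. Up to a fixed covering factor, $W_{\frac32}(c,0,1)$ equals the Weiss energy of $\widetilde c$ at the matching homogeneity, so the statement becomes a classical-type epiperimetric inequality near the homogeneous harmonic polynomial $\widetilde\Phi$.

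\emph{The competitor and the gain.} Expand the lifted trace $\widetilde g$ in spherical harmonics on $\mathbb{S}^{d-1}$ --- only those with the right parity under the deck transformation occur --- and split them into a \emph{leading} space (the eigenspace containing $\widetilde\Phi$ and its rotations), \emph{low} modes of smaller eigenvalue, and a \emph{remainder} of strictly larger eigenvalue, separated from the leading one by a gap $c_d>0$ depending only on the spectrum of $\mathbb{S}^{d-1}$. Define the competitor $\widetilde u$ mode by mode: leading and low modes are extended homogeneously exactly as in $\widetilde c$ (so as not to spoil Lipschitz regularity after refolding), while each remainder mode is extended with a strictly larger homogeneity $\tfrac32+\sigma$, $\sigma=\sigma(d)>0$ small. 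Since spherical harmonics stay $L^2$-orthogonal on every sphere $\partial B_r$, the Weiss energy splits as a sum over modes of one-dimensional radial functionals in the chosen homogeneities; the leading modes contribute $0$ to both $\widetilde c$ and $\widetilde u$, the low modes contribute the same nonpositive amount to both, and each remainder mode contributes strictly less for $\widetilde u$ (the derivative of the radial functional with respect to the homogeneity is negative precisely on eigenvalues above the leading one). The resulting gain is a sum of positive terms over the remainder modes which, using the gap $c_d$ and the fact that the low modes contribute nonpositively to $W_{\frac32}(\widetilde c)$, is bounded below by a dimensional fraction of $W_{\frac32}(\widetilde c)$; folding back yields the claimed inequality with $\eps=\eps(d)$.

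\emph{Main obstacle.} The real work is the refolding: producing a genuine $u\in H^1(B_1;\Sigma_N)\cap C^{0,1}(B_1;\R^N)$ with $u=c$ on $\partial B_1$ out of the single-valued $\widetilde u$. As $|\widetilde u|$ is Lipschitz and vanishes on the branch locus, this reduces to checking that for $\delta,\tau$ small the nodal set of $\widetilde u$ still partitions $B_1$ into exactly three components --- one per main sector, with no spurious extra component --- a perturbation statement that is genuinely delicate near the codimension-two spine, where the model profile vanishes cubically and the nodal structure is degenerate; and to gluing in the thin-tube components $c_i$ ($i\ge4$) without overlaps, using their $\delta$-smallness (they only add a nonnegative term to $W_{\frac32}(c)$, which is discarded, or are likewise pushed to higher homogeneity). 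All the smallness in $\delta,\tau$ is spent here, whereas $\eps$ depends on $d$ alone, through the spectral gap $c_d$. A softer alternative is a compactness--contradiction argument: were the inequality to fail along $c_n\to\psi$, the energy-minimizing replacements, suitably rescaled, would converge to a nonzero limit realizing a frequency strictly below $\tfrac32$ or a second eigenfunction, contradicting the classification of $\mathcal{S}_{\sfrac{3}{2}}$ recalled in \Cref{subsec:classification} --- but this does not readily give the dimensional dependence of $\delta,\eps,\tau$, so the constructive route is preferable.
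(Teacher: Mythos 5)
First, a framing remark: the survey does not prove \cref{t:epi} --- it only recalls the statement from \cite{OV2} --- so there is no in-paper argument to compare against line by line. Your strategy (restore the missing boundary condition $u=c$ on $\partial B_1$, without which the statement is vacuous; reduce by homogeneity to the spherical functional $\mathcal W(g)=\int_{\mathbb S^{d-1}}|\nabla_\theta g|^2-\mu_0|g|^2$ with $\mu_0=\tfrac32(\tfrac32+d-2)$; pass to the symmetric two-valued/branched double-cover picture of \cref{subsec:2-val} so that $Y$ lifts to a $\mu_0$-eigenfunction and $W_{\sfrac32}(Y,0,1)=0$; decompose the trace into odd eigenmodes and gain energy by pushing the super-critical modes to homogeneity $\tfrac32+\sigma$) is the standard ``direct'' epiperimetric construction and, as far as one can tell, the route of the cited reference. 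The spectral bookkeeping is correct: the odd spectrum of the branched cover has a dimensional gap above $\mu_0$, the first variation of $\alpha\mapsto\frac{\alpha^2+\lambda}{d+2\alpha-2}$ at $\alpha=\tfrac32$ has the sign of $\mu_0-\lambda$, and low modes, kept homogeneous, cancel between $c$ and the competitor.

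That said, what you give is a plan with a genuine gap, and the gap sits exactly where the theorem lives. (i) The refolding is not merely ``check that the nodal set of $\widetilde u$ still has three components'': one must produce from $\widetilde u$ an actual map into $\Sigma_N$, glue in the components $c_i$, $i\ge 4$, disjointly from the three main ones (they cannot be ``discarded'', since that violates $u=c$ on $\partial B_1$), and verify that the mode-by-mode energy identity survives this surgery near the codimension-two spine; all of the quantifiers $\delta,\tau$ are consumed here and none of it is carried out. (ii) One step, as written, is wrong: extending the leading and low modes homogeneously does \emph{not} preserve Lipschitz regularity after refolding. Individual odd eigenmodes on the branched cover generically behave like $\rho^{1/2}$ in the distance $\rho$ to the spine (e.g.\ $x_1\,\Re\big((x_{d-1}+ix_d)^{1/2}\big)$ is a $\mu_0$-mode, tangent to the rotations of the lift of $Y$), and only the full trace $g$ --- not its spectral projections --- inherits the linear vanishing at the branch locus from the Lipschitz bound on $c$. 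Consequently $r^{3/2}g_{\mathrm{low+lead}}+r^{3/2+\sigma}g_{\mathrm{high}}$ is in general only $C^{0,1/2}$ near the spine, so your competitor does not land in $C^{0,1}(B_1;\R^N)$ as the statement demands; you must either correct or cut off the $\rho^{1/2}$-discrepancy near the spine (absorbing the cost into the spectral gain) or settle for an $H^1$ competitor, which suffices for the application to minimizers but is not what you set out to prove. Your closing compactness alternative is rightly discarded for the reason you give.
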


		\subsubsection{Rectifiability and measure bounds of the singular set}
		
		Besides $\varepsilon$-regularity results near particular frequencies, one can investigate dimension and measure bounds for the singular set, for both minimizers and critical points. While the set of regular points is known to be a smooth $(d-1)$-dimensional manifold, we also know that the singular set is strictly smaller. More precisely, if $u\in\mathcal{S}(D,N)$ then
		\begin{equation}\label{eq:hausdorff}
			\dim_{\mathcal{H}}(\mathcal{S}_D(u))\leq d-2
		\end{equation}
		and this result is clearly sharp, keeping in mind $2$-dimensional minimizers (and their cylindrical extensions). For reference, \eqref{eq:hausdorff} has been proved in \cite[Theorem 6.4]{gromov-schoen} in the case of minimizers and in \cite[Theorem 1.1]{TT} for the class $\mathcal{S}(D,N)$.
		
		\vspace{0.2cm}
		
		Actually, something more can be said. In particular, in \cite{Alper} (see also \cite{Dees}, which treats only minimizers, in the broader context of harmonic maps into singular spaces more general than $\Sigma_N$), the authors proved the following. For any $u\in \mathcal{S}(D,N)$, we have:
		\begin{itemize}
			\item the singular set $\mathcal{S}_D(u)$ is, locally, countably $(d-2)$-rectifiable; namely, it can be covered by countably many $(d-2)$-dimensional $C^1$ manifolds, up to a set of $(d-2)$-dimensional Hausdorff measure zero;
			\item for any compact $K\Subset D$ there exists $C>0$ and $r_0\in(0,\dist(K,\partial D)$ (depending on $d$, $u$ and $K$) such that
			\begin{equation*}
				\left|B_r(\mathcal{S}_D(u)\cap K)\right|\leq Cr^2\quad\text{for all }r\in(0,r_0),
			\end{equation*}
			where
			\begin{equation*}
				B_r(\mathcal{S}_D(u)\cap K):=\bigcup_{x_0\in\mathcal{S}_D(u)\cap K}B_r(x_0).
			\end{equation*}
			\item as a consequence of the previous point, $\mathcal{S}_D(u)$ has locally finite upper $(d-2)$-dimensional Minkowski content and $(d-2)$-dimensional Hausdorff measure.
		\end{itemize}
		We point out that the results contained in \cite{Alper,Dees} are based on the techniques introduced by Naber and Valtorta in the context of harmonic maps, see \cite{naber-valtorta}.

		\subsection{Boundary regularity}\label{sec:boundary}
		
		We now pass to the description of the known results for what concerns the regularity of both solutions and their free boundaries up to the fixed boundary $\partial D$. Compared to the interior case, to the best of our knowledge, there are few results concerning boundary regularity and they are essentially contained in \cite{serbinowski}, \cite{CTV3} and \cite{OV1}. In particular, in \cite{serbinowski}, the author works in the context of harmonic maps with values into NPC spaces and proves up-to-the-boundary H\"older regularity of minimizers, in terms of the H\"older character of the boundary itself and the Dirichlet datum. On the other hand, in \cite[Theorem 8.4]{CTV3}, the authors prove that if the domain $D$ is of class $C^1$ and if $u\in\mathcal{S}(D,N)$ satisfies $u=\phi$ on $\partial D$ for some $\phi\in W^{1,\infty}(\partial\Omega)$, then $u$ is locally Lipschitz continuous up to $\partial D$. Finally, in \cite{OV1} we focused on the regularity and the behavior of the free boundary $\mathcal{F}_D(u)$ up to the fixed boundary $\partial D$, when $u$ is a minimizer, i.e. $u\in\mathcal{M}(D,N)$, and satisfies homogeneous Dirichlet boundary conditions on a portion of $\partial D$. In particular, we analyze the regularity and geometrical structure of the \emph{free boundary on the fixed boundary}, defined as
		\begin{equation*}
			\mathcal{F}_{\partial D}(u):=\overline{\mathcal{F}_D(u)}\cap \partial D.
		\end{equation*}
		We now briefly discuss the main results of \cite{OV1} and we refer to the paper for all the details. We point out that the work \cite{OV1} actually deals with the spectral optimal partition problem described in \Cref{subsec:optimal}, which is slightly different from the problem of minimization of the pure Dirichlet energy, since there are also lower order terms to handle. However, one can easily see that all the results obtained for the former trivially hold true for the latter (which is the setting we are considering in the present survey).
		
		\subsubsection{Set-up and assumptions on $\partial D$} We begin by describing the setting of of \cite{OV1}, which is essentially laid as an extension of \Cref{sec:interior} in order to \enquote{see} the boundary $\partial D$. The definition of minimizer is the same as in \Cref{def:minimizer_1}, but with $\overline{D}$ replacing $D$ and prescribing homogeneous Dirichlet boundary conditions on a relatively open set $\Gamma\sub\partial D$. Namely, we say that $u\in\mathcal{M}_0^\Gamma(D,N)$ if it belongs to the admissible set
		\begin{equation*}
			\mathcal{A}_0^\Gamma(D,N):=\big\{u\in H^1_{\textup{loc}}(\overline{D},\Sigma_N)\colon 
			u_i\in H^1_{0,\Gamma}(\Omega)~\text{for any open bounded }\Omega\sub D\ \text{and all}\ i\ge 1\big\},
		\end{equation*}
		where $H^1_{0,\Gamma}(\Omega)$ is the space of $H^1$ functions in $\Omega$ that vanish on $\Gamma$ in the following sense
		\begin{equation*}
			H^1_{0,\Gamma}(\Omega):=\overline{C_c^\infty(\overline{\Omega}\setminus \Gamma)}^{\,\norm{\cdot}_{H^1(\Omega)}},
		\end{equation*}
		and if it minimizes the Dirichlet energy, i.e.
		\begin{equation*}
			E(u,\Omega)\leq E(v,\Omega)\quad\text{for all }v\in H^1(\Omega,\Sigma_N)~\text{such that }u-v\in H^1_0(\Omega,\R^N),
		\end{equation*}
		for any open bounded $\Omega\sub D$. Analogously, we say that $u\in\mathcal{S}_0^\Gamma(D,N)$ if $u\in\mathcal{S}(D,N)\cap (H^1_{0,\Gamma}(\Omega))^N$ for any open bounded $\Omega\sub D$. Finally, we analogously extend the notation from the interior case also for the $\gamma$-homogeneous functions, i.e. $\mathcal{M}_{0,\gamma}(\mathcal{C},N):=\mathcal{M}_{0,\gamma}^{\partial\mathcal{C}}(\mathcal{C},N)$ and $\mathcal{S}_{0,\gamma}(\mathcal{C},N):=\mathcal{S}_{0,\gamma}^{\partial \mathcal{C}}(\mathcal{C},N)$, for any open cone $\mathcal{C}\sub\R^d$. We emphasize that these spaces contain functions that vanish on the whole boundary of the cone $\partial \mathcal{C}$.
		
		\vspace{0.2cm}
		
		\noindent We now proceed by fixing the requirements on $\Gamma$. Without loss of generality, we describe the situation near the origin, assuming that $0\in\Gamma$. Then, we assume that, locally near $0$, $\Gamma=\partial D$ and that $D$ is the epigraph of a $C^1$ function $\varphi$, i.e.
		\begin{align*}
			D\cap B_1&=\{(x',x_d)\in \R^d\colon x_d>\varphi(x')\}\cap B_1, \\
			\partial D\cap B_1= \Gamma\cap B_1&=\{(x',x_d)\in\R^d\colon x_d=\varphi(x')\}\cap B_1,
		\end{align*}
		where, up to a rotation of the coordinate axes, we can suppose $\varphi(0)=|\nabla\varphi(0)|=0$. 
		
		\vspace{0.3cm}
		
		\begin{center}
			\it In the rest of the section we will work in the ball $B_1$ where $\Gamma=\partial D$ and $u\equiv 0$ on $\partial D$.\\
			In order to keep the notation as simple as possible we drop the index $\Gamma$.
		\end{center}
		
		\vspace{0.3cm}
		
		\noindent Moreover, if we denote by $\sigma\in C([0,2])$ the modulus of continuity of $\nabla \varphi$:
		\begin{equation*}
			|\nabla\varphi(x')-\nabla\varphi(y')|\leq \sigma(|x'-y'|)\quad\text{for all }x',y'\in B_1\cap\{x_d=0\},
		\end{equation*}
		we will assume that there exists $\sigma_0\in C^1(0,2)$ such that
		\begin{equation*}
			(r^{-m_d}\sigma_0(r))'\leq 0,\qquad \int_0^2\frac{\sigma_0(r)}{r}\,\d r<\infty,\qquad \int_0^2\frac{1}{r\sigma_0(r)}\int_0^r\frac{\sigma(t)}{t}\,\d t\,\d r<\infty,
		\end{equation*}
		where $m_d>0$ is a fixed dimensional constant. This last assumption is here stated in its generality, but it is actually satisfied if $\Gamma$ is of class $C^{1,\alpha-\text{Dini}}$ with $\alpha>3$ (see \cite[Definition 2.2]{OV1} and the discussion  in \cite{OV1}) and, in particular, if $\Gamma$ is of class $C^{1,\alpha}$, for some $\alpha\in(0,1)$.
		
		\subsubsection{Differentiability of minimizers}\label{subsec:diff}
		
		Let $u\in\mathcal{M}_0(D,N)\setminus \{0\}$. While from \cite[Theorem 8.4]{CTV3} it was already known that $u_i$ is Lipschitz up to $\partial D$, for any $i\in\{1,\dots,N\}$, in \cite[Theorem 2.3]{OV1} we proved that actually $u$ is differentiable at $0$ and that exactly one of the following holds
		\begin{enumerate}
			\item there exists $i\in\{1,\dots,N\}$ such that
			\begin{itemize}
				\item $u_i(x)=a_1\, x_d^++o(|x|)$ as $|x|\to 0$, for some $a_1>0$;
				\item $u_j\equiv 0$ in a neighborhood of $0$, for all $j\neq i$;
			\end{itemize}
			\item there exists $i,j\in\{1,\dots,N\}$, with $i\neq j$, $\bm{e}=(e_1,\dots,e_{d-1},0)\in\mathbb{S}^{d-2}\times\{0\}$ and $a_2>0$ such that
			\begin{itemize}
				\item $u_i(x)=a_2\,(x\cdot\bm{e})^+ \,x_d^++o(|x|^2)$ as $|x|\to 0$;
				\item $u_j(x)=a_2\,(x\cdot\bm{e})^- \,x_d^++o(|x|^2)$ as $|x|\to 0$;
				\item $u_k\equiv 0$ in a neighborhood of $0$, for all $k\neq i,j$;
			\end{itemize}
			\item $u_i(x)=o(|x|^2)$ as $|x|\to 0$ for any $i\in\{1,\dots,N\}$ and there exists $i,j\in\{1,\dots,N\}$, with $i\neq j$, such that $u_i\not\equiv 0$ and $u_j\not\equiv 0$ in a neighborhood of $0$.
		\end{enumerate}
		By scanning the proofs in \cite{OV1}, one can track the dependence of the constants $a_1$ on the point and derive its continuity, thus implying that $\nabla u_i(x_0)$ is continuous with respect to $x_0\in\partial D$, for any $i$.
		
		\subsubsection{Almgren and Weiss monotonicity formulas}
		
		Also in the boundary case, one can derive a good notion of frequency function and Weiss function, which turn out to be almost monotone. In particular, basing ourselves on the techniques introduced in \cite{Adolfsson1997} with the scope of studying harmonic functions vanishing on non-convex boundaries, in \cite{OV1} we proved that there exists a $C^1$ diffeomorphism $\Psi\colon \R^d\to\R^d$ such that $\Psi(0)=0$ and $D\Psi(0)=I$, which induces a suitable perturbation of balls in such a way that the corresponding frequency function is almost monotone. More precisely, we have that for any non-trivial $u\in\mathcal{M}_0(D,N)$ (extended by zero outside $D$) one can define the (boundary) Almgren's \emph{frequency function} as
		\begin{equation*}
			\mathcal{N}(u,r):=\frac{E(u,r)}{H(u,r)},
		\end{equation*}
		where for any $r\in(0,1)$
		\begin{equation*}
			E(u,r):=\frac{1}{r^{d-2}}E(u,\Psi(B_r))=\frac{1}{r^{d-2}}\sum_{i=1}^N\int_{\Psi(B_r)}|\nabla u_i|^2\dx
		\end{equation*}
		is the scaled energy and
		\begin{equation*}
			H(u,r):=\frac{1}{r^{d-1}}\sum_{i=1}^N\int_{\partial \Psi(B_r)}u_i^2\ds
		\end{equation*}
		is the scaled height function. Then, as proved in \cite[Theorem 6.6]{OV1}, the function $r\mapsto \mathcal{N}(u,r)$ is almost monotone, in the sense that there exists $C>0$ and $f\in L^1(0,1)$ such that
		\begin{equation*}
			\Big(e^{C\int_0^r f(t)\,\d t}\mathcal{N}(u,r)\Big)'\geq 0.
		\end{equation*}
		Hence, we can define the \emph{frequency} of $u$ at $0$
		\begin{equation*}
			\gamma(u,0):=\lim_{r\to 0^+}\mathcal{N}(u,r).
		\end{equation*}
		Analogously, one can define the frequency function $\mathcal{N}(u,x_0,r)$ and its limit $\gamma(u,x_0)$ at any $x_0\in\partial D$. Again, by standard arguments, the map $x_0\mapsto \gamma(x_0,u)$ is upper-semicontinuous. Finally, we point out that also the Weiss energy
		\begin{equation*}
			W_\gamma(u,r):=\frac{H(u,r)}{r^{2\gamma}}\Big(\mathcal{N}(u,r)-\gamma\Big)
		\end{equation*}
		is almost monotone non-decreasing with respect to $r>0$, for any $\gamma\leq \gamma(u,0)$, see \cite[Proposition 6.16]{OV1}.
		
		\subsubsection{Almgren blow-up}
		
		With some technical adjustments, by virtue of the almost monotonicity of a perturbed frequency function (see the previous paragraph), we can perform a blow-up analysis in the same spirit as \cref{subsec:almgren}. More precisely, setting
		\begin{equation*}
			u_{x_0,r}(x):=\frac{u(x_0+rx)}{\sqrt{H(u,x_0,r)}}\,,
		\end{equation*}
		we have that, for any sequence $r_n\to 0$ there exists a subsequence $r_{n_k}\to 0$ such that
		\begin{equation*}
			u_{x_0,r_{n_k}}\to U\quad\text{uniformly in }B_1~\text{and in }H^1(B_1;\R^N),
		\end{equation*}
		as $k\to\infty$, for some $U\in\mathcal{M}_{0,\gamma}(\R^d_+,N)$ satisfying
		\begin{equation*}
			\sum_{i=1}^N\int_{\partial B_1}|U_i|^2\ds=H(U,0,1)=1,
		\end{equation*}
		where $\gamma=\lim_{r\to 0}\mathcal{N}(u,x_0,r)$ and $\R^d_+:=\{(x',x_d)\in\R^d\colon x_d>0\}$.
		
		\subsubsection{Admissible frequencies} Again, the description of the results in this section follows the structure of the interior case. In particular, we again call $\gamma\geq 0$ an admissible frequency if $\mathcal{M}_{0,\gamma}(\R^d_+,N)\setminus\{0\}\neq \emptyset$ for some $N\in\N$. We have the following results, which are mainly contained in \cite{OV1}.
		\begin{itemize}
			\item In view of the Lipschitz continuity of minimizers, we have that
			\begin{equation*}
				\mathcal{M}_{0,\gamma}(\R^d_+,N)\setminus\{0\}=\emptyset\quad\text{for all }\gamma<1.
			\end{equation*}
			Moreover, 
			\begin{equation*}
				\mathcal{M}_{0,1}(\R^d_+,N)=\mathcal{M}_{0,1}(\R^d_+,1)=\mathcal{S}_{0,1}(\R^d_+,1)=\mathcal{S}_{0,1}(\R^d_+,N)
			\end{equation*}
			and these classes only contain functions of the form
			\begin{equation*}
				U_1=x_d^+,\qquad U_i\equiv 0\quad\text{for }i\geq 2,
			\end{equation*}
			up to rotations, multiplication by a dimensional constant and relabeling of the components' numbering.
			\item Furthermore
			\begin{equation*}
				\mathcal{M}_{0,\gamma}(\R^d_+,N)\setminus\{0\}=\emptyset\quad\text{for all }1<\gamma<2
			\end{equation*}
			and 
			\begin{equation*}
				\mathcal{M}_{0,2}(\R^d_+,N)=\mathcal{M}_{0,2}(\R^d_+,2)
			\end{equation*}
			which contains only configurations of the form
			\begin{equation*}
				U_1=x_{d-1}^-\,x_d^+,\qquad U_2=x_{d-1}^+\,x_d^+,\qquad U_i\equiv 0\quad\text{for }i\geq 3,
			\end{equation*}
			again up to rotations, multiplication by a dimensional constant and relabeling of the components' numbering.
			\item There exists $\varepsilon_d'>0$ such that
			\begin{equation*}
				\mathcal{M}_{0,\gamma}(\R^d_+,N)\setminus\{0\}=\emptyset\quad\text{for all }2<\gamma<2+\varepsilon_d',
			\end{equation*}
			for any $N\in\N$.
			\item Reasoning analogously to the interior case, we can easily obtain the full classification in dimension $2$. More precisely, we know that
			\begin{equation}\label{eq:2D_b}
				\mathcal{S}_{0,\gamma}(\R^2_+,N)\setminus\{0\}\neq \emptyset\quad\text{if and only if }
				\gamma = m~\text{for some }m\in\N~\text{and }2\leq N\leq m.
			\end{equation}
			In particular, this follows from the existence of the profile
			\begin{equation}\label{eq:2D_U_b}
				U_i(r,\theta):=\begin{cases}
					r^{m}\left|\sin\left(m\theta\right)\right|,&\text{for }\theta\in \left(\frac{\pi}{m}(i-1),\frac{\pi}{m}i\right), \\
					0,&\text{elsewhere},
				\end{cases}
			\end{equation}
			for $i=1,\dots,m$. Again as in the interior case, by working on the labeling of the components of $U=(U_1,\dots,U_m)$, and keeping in mind that two distinct connected components of $\{U_i>0\}$ cannot share a line, one can easily produce minimal configurations for any $2\leq N\leq m$. Moreover, if $U\in\mathcal{M}_{0,\gamma}(\R^d_+,N)\setminus\{0\}$ then $\gamma$ and $N$ must satisfy \eqref{eq:2D_b} and $U$ must be of the form \eqref{eq:2D_U_b} (up to relabeling of the vector's components). Furthermore, one can also see that
			\begin{equation*}
				\mathcal{S}_{0,m}(\R^2_+,N)=\mathcal{M}_{0,m}(\R^2_+,N)
			\end{equation*}
			for all the admissible $N\in\N$. Finally, we observe that, by cylindrical extension in the remaining $d-2$ variables, there holds
			\begin{equation*}
				\emptyset\neq \mathcal{M}_{0,m}(\R^d_+,N)\setminus\{0\} \sub\mathcal{S}_{0,m}(\R^d_+,N)\setminus\{0\},
			\end{equation*}
			for any $2\leq N\leq m$.
			\item Finally, again with an argument analogous to the interior case, we can produce functions of $\mathcal{M}_{0,m}(\R^d_+,N)$ starting from harmonic homogeneous polynomials of $\R^d$ which are odd with respect to $\partial\R^d_+$ (hence vanishing on it).
		\end{itemize}
		
		\subsubsection{Traces of the positivity sets and decomposition of the free boundary}\label{subsec:trace}
		
		Given $\gamma\ge 1$, we use the following notation for the set of points of frequency $\gamma$     \begin{equation*}
			\mathcal{F}_{\partial D}^\gamma(u):=\{x\in\partial D\colon \gamma(u,x)=\gamma\}.
		\end{equation*}
		Analogously to the interior case, in view of the Almgren blow-up analysis, $\gamma\geq 1$ is admissible (in the sense of \Cref{subsec:classification}) if and only if $\mathcal{F}_{\partial D}^\gamma(u)\neq \emptyset$ for some non-trivial $u\in\mathcal{M}_0(D,N)$, for some $N\in\N$ (and for some $D\sub\R^d$). Moreover, we have the following correspondence with the results described in \Cref{subsec:diff} (we use the corresponding numbering):
		\begin{enumerate}
			\item[$-$] (1) happens if and only if $\gamma(u,0)=1$. In this case, we say that $0$ belongs to $\omega_i^u$. The set $\omega_i^u$ can be interpreted as the \emph{trace} of the positivity set 
			$$\Omega_i^u=\{x\in D\colon u_i(x)>0\}.$$ 
			Moreover, we can identify $\omega_i^u$ as follows:
			\begin{equation*}
				\omega_i^u=\{x\in\partial D\colon \partial_{\nnu}u_i(x)<0\}.
			\end{equation*}
			\item[$-$] (2) happens if and only if $\gamma(u,0)=2$. In this case, we say that 
			$$0\in \mathcal{R}_{\partial D}(u):=\mathcal{F}_{\partial D}^2(u);$$
			\item[$-$] (3) happens if and only if $\gamma(u,0)\geq 2+\varepsilon_d'$. In this case, we say that 
			$$0\in\mathcal{S}_{\partial D}(u):=\bigcup_{\gamma\geq 2+\varepsilon_d'}\mathcal{F}_{\partial D}^\gamma(u).$$
		\end{enumerate}
		
		\subsubsection{Regularity of the free boundary}
		
		The first observation is that 
		\begin{equation}
			\mathcal{F}_{\partial D}(u)=\bigcup_{i=1}^N \partial_{\partial D}\,\omega_i^u,
		\end{equation}
		where $\partial_{\partial D}\,\omega_i^u$ is the boundary of $\omega_i^u$ in the relative topology of $\partial D$.
		This is not trivial, since a priori only the inclusion $\supseteq$ holds, and needs a careful justification (see \cite[Proposition 9.4]{OV1}). In particular, this excludes behaviors like the one on the right in the figure below.\footnote{For aesthetic reasons, we omitted the upper index $u$ in $\Omega_i^u$ and $\omega_i^u$.}
		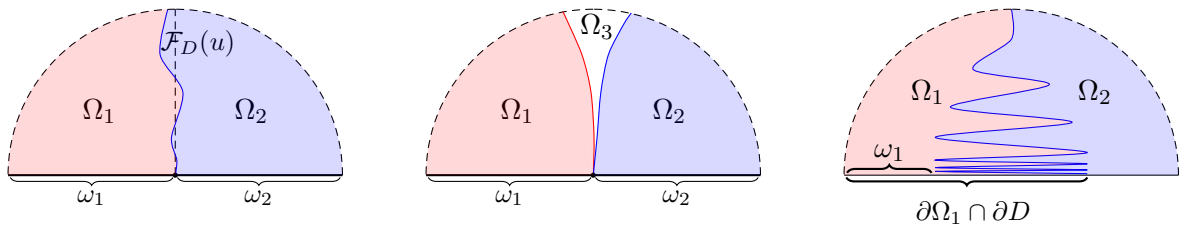
\begin{figure}[h]
			\centering
			\begin{tikzpicture}
				\draw[densely dashed, name path=g_arc_1] (2.2,0) arc [start angle=0, end angle = 90,x radius = 22mm, y radius = 22mm];
				\draw[densely dashed, name path=g_arc_2] (0,2.2) arc [start angle=90, end angle = 180,x radius = 22mm, y radius = 22mm];
				\draw [densely dashed, name path=blow] plot [smooth] coordinates {(0,0) (0,2.2)};
				\draw [blue, name path=vertical] plot [smooth] coordinates {(0,0) (0.02,0.2) (-0.05,0.5) (0.1,1.1) (-0.2,1.6) (-0.1,2.19)};
				\draw [thick, name path=hor] plot [smooth] coordinates {(-2.2,0) (2.2,0)};
				\draw node at (-1.0,0.85) {$\Omega_1$};
				\draw node at (1.0,0.85) {$\Omega_2$};
				\draw node at (0.3,1.75) {\small$\mathcal F_D(u)$};
				\draw [decorate,decoration={brace,mirror, amplitude=3pt}]
				(-2.17,-0.03) -- (-0.03,-0.03);
				\draw node at (-1.1,-0.3) {\small$\omega_1$};
				\draw [decorate,decoration={brace,mirror, amplitude=3pt}]
				(0.03,-0.03) -- (2.17,-0.03);
				\draw node at (1.1,-0.3) {\small$ \omega_2$};
				\draw[thick] (0,0) circle [very thick, radius=0.02cm];
				\begin{scope}[transparency group,opacity=0.15]
					\tikzfillbetween[of=g_arc_1 and hor]  {color=blue};
					\tikzfillbetween[of=g_arc_2 and hor]  {color=blue};
					\tikzfillbetween[of=g_arc_2 and vertical] {color=red};
				\end{scope}
				
				\begin{scope}[shift={(5.5,0)}]
					\draw[densely dashed, name path=g_arc_3] (2.2,0) arc [start angle=0, end angle = 90,x radius = 22mm, y radius = 22mm];
					\draw[densely dashed, name path=g_arc_4] (0,2.2) arc [start angle=90, end angle = 180,x radius = 22mm, y radius = 22mm];
					\draw[draw=none, name path=g_arc_5] (2.2,0) arc [start angle=0, end angle = 77,x radius = 22mm, y radius = 22mm];
					\draw[draw=none, name path=g_arc_6] (-2.2,0) arc [start angle=180, end angle = 100,x radius = 22mm, y radius = 22mm];
					\draw [red, name path=vertical1] plot [smooth] coordinates {(0,0) (0.01,0.5) (-0.03,1.1) (-0.15,1.6) (-0.4,2.17)};
					\draw [blue, name path=vertical2] plot [smooth] coordinates {(0,0) (0.02,0.2) (0.05,0.5) (0.1,1.1) (0.2,1.6) (0.4,2) (0.5,2.15)};
					\draw [thick, name path=hor2] plot [smooth] coordinates {(0,0) (2.2,0) };
					\draw [thick, name path=hor1] plot [smooth] coordinates {(0,0) (-2.2,0) };
					\draw node at (-1.0,0.85) {$\Omega_1$};
					\draw node at (1.0,0.85) {$\Omega_2$};
					\draw node at (0.05,1.97) {$\Omega_3$};
					\draw [decorate,decoration={brace,mirror, amplitude=3pt}]
					(-2.17,-0.03) -- (-0.03,-0.03);
					\draw node at (-1.1,-0.3) {\small$\omega_1$};
					\draw [decorate,decoration={brace,mirror, amplitude=3pt}]
					(0.03,-0.03) -- (2.17,-0.03);
					\draw node at (1.1,-0.3) {\small$ \omega_2$};
					\draw[thick] (0,0) circle [very thick, radius=0.02cm];
					\begin{scope}[transparency group,opacity=0.15]
						\tikzfillbetween[of=vertical1 and hor1]  {color=red};
						\tikzfillbetween[of=hor2 and vertical2]  {color=blue};
						\tikzfillbetween[of=g_arc_6 and hor1]  {color=red};
						\tikzfillbetween[of=g_arc_5 and hor2]  {color=blue};
					\end{scope}
					
				\end{scope}

				\begin{scope}[shift={(11,0)}]
					\draw [blue, name path=osc] plot [smooth] coordinates {(0,2.2) (0,1.8) (-0.5,1.4) (0.5,1.2) (-0.8,0.9) (0.8,0.7) (-1,0.5) (1,0.3) (-1,0.2) (1,0.15) (-1,0.1) (1,0.07) (-1,0.05) (1,0.02)};
					\draw[densely dashed, name path=g_arc_1] (2.2,0) arc [start angle=0, end angle = 90,x radius = 22mm, y radius = 22mm];
					\draw[densely dashed,  name path=g_arc_2] (0,2.2) arc [start angle=90, end angle = 180,x radius = 22mm, y radius = 22mm];
					\draw [name path=hor] plot [smooth] coordinates {(-2.2,0) (2.2,0)};
					\begin{scope}[transparency group,opacity=0.15]
						\tikzfillbetween[of=g_arc_1 and hor] {color=blue};
						\tikzfillbetween[of=g_arc_2 and osc] {color=red};
					\end{scope}
					\draw node at (-1.1,1.1) {$\Omega_1$};
					\draw node at (1.1,1.1) {$\Omega_2$};
					\draw node at (-1.6,0.3) {$\omega_1$};
					\draw [thick, decorate,decoration={brace,amplitude=3pt}]
					(-2.16,0.03) -- (-1.05,0.03);
					\draw [thick, decorate,decoration={brace,mirror, amplitude=3pt}]
					(-2.17,-0.03) -- (1,-0.03);
					\draw node at (-0.5,-0.5) {\small$\partial \Omega_1\cap \partial D$};
				\end{scope}
				
			\end{tikzpicture}
			\caption{A regular free interface (on the left), a cusp-like singularity (in the middle), and an oscillating free boundary (on the right). We show that, among these, only the behavior on the left is possible.}
			\label{fig:three}
		\end{figure}

		\noindent At this point, we know that only (2) and (3) in \Cref{subsec:diff} and \Cref{subsec:trace} can hold if $0\in\mathcal{F}_{\partial D}(u)$ and that
		\begin{equation*}
			\mathcal{F}_{\partial D}(u)=\mathcal{R}_{\partial D}(u)\cup \mathcal{S}_{\partial D}(u).
		\end{equation*}
		We are now able to state the two main results of \cite{OV1}. The first one concerns the regularity of $\mathcal{R}_{\partial D}(u)$ from the \enquote{point of view} of $\partial D$.
		
		\begin{theorem}\label{thm:fixed_free_bound}
			Let $u\in\mathcal{M}_0(D,N)\setminus\{0\}$. We have that $\mathcal{R}_{\partial D}(u)$ and $\mathcal{S}_{\partial D}(u)$ satisfy the following: $\mathcal{S}_{\partial D}(u)$ is a relatively closed set and $\mathcal{R}_{\partial D}(u)$ is, locally, a $(d-2)$-dimensional submanifold of class $C^1$. Moreover,  for any $x_0\in\mathcal{R}_{\partial D}(u)$ there exist $i,j\in\{1,\dots,N\}$, with $i\neq j$, and $r_0>0$ such that 
			\begin{align*}
				&\omega_i^u\cap B_r(x_0)\neq \emptyset,\quad\omega_j^u\cap B_r(x_0)\neq \emptyset, \\
				&\omega_k^u\cap B_r(x_0)=\emptyset\ \text{ for all }\ k\neq i,j,
			\end{align*}
			for all $r\leq r_0$.
		\end{theorem}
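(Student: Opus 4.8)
The plan is to read off the first two assertions directly from the boundary frequency gap, to obtain the local two-phase structure near a regular point from the pointwise differentiability result recalled in \Cref{subsec:diff}, and then to promote that \emph{pointwise} second-order expansion to one which is \emph{uniform} along $\mathcal{R}_{\partial D}(u)$; the latter is exactly what upgrades the expansion to a $C^1$ manifold. For the first part, recall that at $x_0\in\partial D$ the frequency $\gamma(u,x_0)$ is the homogeneity degree of some nontrivial blow-up limit in $\mathcal{M}_{0,\gamma}(\R^d_+,N)$, hence by the classification recalled in \Cref{subsec:diff} it lies in $\{1\}\cup\{2\}\cup[2+\varepsilon_d',+\infty)$, no admissible value falling in $(1,2)\cup(2,2+\varepsilon_d')$. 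Since $x_0\mapsto\gamma(u,x_0)$ is upper semicontinuous, the superlevel set $\mathcal{S}_{\partial D}(u)=\{x\in\partial D\colon\gamma(u,x)\ge 2+\varepsilon_d'\}$ is relatively closed, and therefore $\mathcal{R}_{\partial D}(u)=\mathcal{F}_{\partial D}(u)\setminus\mathcal{S}_{\partial D}(u)$ is relatively open in the relatively closed set $\mathcal{F}_{\partial D}(u)=\overline{\mathcal{F}_D(u)}\cap\partial D$.

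Next I would fix $x_0\in\mathcal{R}_{\partial D}(u)$, so that $\gamma(u,x_0)=2$ and the Almgren blow-up puts us in case~(2) of \Cref{subsec:diff}: there are $r_0>0$, indices $i\neq j$, a vector $\bm{e}=\bm{e}_{x_0}\in\mathbb{S}^{d-2}\times\{0\}$ and a coefficient $a=a_{x_0}>0$ such that, in coordinates centred at $x_0$ with $\partial D$ tangent to $\{x_d=0\}$,
\[
u_i(x)=a\,(x\cdot\bm{e})^+\,x_d^+ + o(|x|^2),\qquad u_j(x)=a\,(x\cdot\bm{e})^-\,x_d^+ + o(|x|^2),
\]
and $u_k\equiv 0$ in $B_{r_0}(x_0)$ for every $k\neq i,j$. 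Via the identification $\omega_\ell^u=\{x\in\partial D\colon\partial_{\nnu}u_\ell(x)<0\}$ from \Cref{subsec:trace}, the vanishing of the $u_k$ forces $\omega_k^u\cap B_{r_0}(x_0)=\emptyset$ for $k\neq i,j$, whereas the expansion makes $\omega_i^u\cap B_r(x_0)$ and $\omega_j^u\cap B_r(x_0)$ nonempty for all $r\le r_0$; this is the last assertion of \Cref{thm:fixed_free_bound}. Since only the two phases $i,j$ survive, the extremality conditions of \Cref{def:extremality} collapse to the statement that $v:=u_i-u_j$ is harmonic in $B_{r_0}(x_0)\cap D$ and vanishes on $\partial D\cap B_{r_0}(x_0)$; by the Dini condition imposed on $\nabla\varphi$, $v$ is $C^1$ up to $\partial D$, so $\partial_{\nnu}v$ is continuous there, and, shrinking $r_0$ if necessary, $\mathcal{R}_{\partial D}(u)\cap B_{r_0}(x_0)$ coincides with $\partial_{\partial D}\omega_i^u\cap B_{r_0}(x_0)$ and with the zero set $\{x\in\partial D\colon\partial_{\nnu}v(x)=0\}\cap B_{r_0}(x_0)$.

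It then remains to prove that $x_1\mapsto\bm{e}_{x_1}$ is continuous on $\mathcal{R}_{\partial D}(u)$ and that the remainder $o(|x|^2)$ in the expansion above is controlled uniformly as $x_1$ ranges over a neighbourhood of $x_0$ in $\mathcal{R}_{\partial D}(u)$. Granted this, $\mathcal{R}_{\partial D}(u)$ carries at each of its points $x_1$ the approximate tangent $(d-2)$-plane $\{x\in\partial D\colon (x-x_1)\cdot\bm{e}_{x_1}=0\}$, depending continuously on $x_1$ and attained at a uniform rate, and then a by now standard argument shows that $\mathcal{R}_{\partial D}(u)$ is locally a $C^1$ graph, hence a $(d-2)$-dimensional $C^1$ submanifold. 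To produce the continuity together with the uniform remainder I see two natural routes: either revisit the proof of case~(2) of \Cref{subsec:diff} in \cite{OV1} tracking the dependence on the base point, exactly as is indicated there for the frequency-$1$ coefficient, so that the almost-monotone Weiss energy $W_2$, the classification of $2$-homogeneous boundary blow-ups and the clean-up together yield, for $x_1$ near $x_0$, a modulus of convergence of the rescalings $v_{x_1,r}$ to the unique profile $a_{x_1}\,(x\cdot\bm{e}_{x_1})\,x_d$ that is uniform in $x_1$; or, after a change of variables flattening $\partial D$ near $x_0$ and an odd reflection, reduce to a solution with frequency $2$ at the origin of a divergence-form equation with Dini coefficients and invoke a boundary epiperimetric inequality at frequency $2$, in the spirit of \Cref{t:epi}.

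The genuinely delicate point is this last step. The uniqueness and explicit shape of the frequency-$2$ blow-up at a single point are already available from \Cref{subsec:diff}, but turning them into the continuity of the tangent-plane map together with a uniform decay of the remainder — the only ingredient that promotes a pointwise second-order expansion to a true $C^1$ manifold — is where the real work lies, and it is precisely there that one needs either a quantitative reworking of the \cite{OV1} blow-up scheme or a frequency-$2$ epiperimetric inequality. By contrast, the relative closedness of $\mathcal{S}_{\partial D}(u)$, the relative openness of $\mathcal{R}_{\partial D}(u)$, and the local two-phase structure are essentially bookkeeping on top of \Cref{subsec:diff} and \Cref{subsec:trace}.
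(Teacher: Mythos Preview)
The paper is a survey and does not itself contain a proof of this theorem: it is stated as one of the two main results of \cite{OV1}, with no argument given here beyond the surrounding discussion. So there is no in-document proof to compare against.

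That said, your outline is a faithful sketch of the strategy actually carried out in \cite{OV1}. The relative closedness of $\mathcal{S}_{\partial D}(u)$ from the frequency gap plus upper semicontinuity, the local two-phase reduction via clean-up and case~(2) of \Cref{subsec:diff}, the harmonicity of $v=u_i-u_j$ from the collapsed extremality conditions, and finally the $C^1$ structure obtained from uniqueness of the frequency-$2$ blow-up together with a decay rate uniform in the base point --- all of this matches the approach there. You also correctly isolate the genuinely delicate step: the route you describe as ``revisit the proof\dots tracking the dependence on the base point'' via the almost-monotone Weiss energy $W_2$ is exactly what \cite{OV1} does (the boundary epiperimetric alternative you mention is not the path taken there). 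One small point worth making explicit in your write-up: the identification of $\mathcal{R}_{\partial D}(u)\cap B_{r_0}(x_0)$ with the zero set $\{\partial_{\nnu}v=0\}$ requires ruling out that a zero of $\partial_{\nnu}v$ nearby could have frequency $\ge 2+\varepsilon_d'$; this follows from upper semicontinuity of the frequency and the gap, but deserves a sentence.
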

		
		This, by itself, does not provide the complete picture, since there might be cases in which $\partial \omega_i^u\cap\partial \omega_j^u$ is regular near some $x_0\in\partial D$, but $x_0\not\in\mathcal R_{\partial D}(u)$. In the following, we describe the behavior of the regular part of interior free boundary as it approaches the fixed boundary $\partial D$. This also rules out the middle case in \Cref{fig:three}.
		
		\begin{theorem}\label{thm:up_to_the_bound}
			Let $u\in\mathcal{M}_0(D,N)\setminus\{0\}$. For any $x_0\in\mathcal{R}_{\partial D}(u)$, let $r_0>0$ be as in \Cref{thm:fixed_free_bound}. We have that $\overline{\mathcal{R}(u)}\cap B_{r_0}(x_0)$ is of class $C^1$ up to $\partial D$ and 
			\begin{equation*}
				\mathcal{R}_{\partial D}(u)\cap B_{r_0}(x_0)=\overline{\mathcal{R}_D(u)}\cap \partial D\cap B_{r_0}(x_0).
			\end{equation*}
			Furthermore, in this case, $\mathcal{R}_D(u)$ approaches $\partial D$ in an orthogonal way, in the sense that, if $\bm{e}_x\in\partial B_1$ denotes a unit normal vector for $\mathcal{R}_D(u)\cap B_{r_0}(x_0)$ at the point $x\in \mathcal{R}_D(u)\cap B_{r_0}(x_0)$ and $\nnu(x_0)$ denotes the unit outer normal to $\partial D$ at $x_0$, then
			\[
			\lim_{\substack{x\to x_0 \\ x \in D}}\bm{e}_x\cdot\nnu(x_0)=0.
			\]
			
		\end{theorem}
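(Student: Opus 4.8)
The plan is to reduce the statement to the analysis, up to $\partial D$, of the nodal set of a single harmonic function vanishing on $\partial D$. Fix $x_0\in\mathcal R_{\partial D}(u)$ and take $i\ne j$ and $r_0>0$ as in \Cref{thm:fixed_free_bound}, so that in $B_{r_0}(x_0)$ only $u_i$ and $u_j$ are not identically zero and both $\omega_i^u$, $\omega_j^u$ meet every ball centred at $x_0$. In $B_{r_0}(x_0)$ the extremality conditions of \Cref{def:extremality} reduce to $-\Delta(u_i-u_j)\ge0$ and $-\Delta(u_j-u_i)\ge0$, hence $h:=u_i-u_j$ is harmonic in $D\cap B_{r_0}(x_0)$, vanishes on $\partial D\cap B_{r_0}(x_0)$, and there $u_i=h^+$, $u_j=h^-$; in particular $\mathcal F_D(u)\cap B_{r_0}(x_0)=\{h=0\}\cap D$, and since $|u|=|h|$ near an interior zero of $h$, such a zero has Almgren frequency $1$ precisely when $\nabla h\ne0$ there, so $\mathcal R_D(u)\cap B_{r_0}(x_0)=\{h=0,\ \nabla h\ne0\}\cap D$. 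By boundary Schauder estimates (this is exactly where the $C^{1,\alpha}$/Dini regularity of $\partial D$ enters), $h\in C^{1,\alpha}$ up to $\partial D$ in a smaller ball.

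\emph{Blow-up.} After translating $x_0$ to $0$ and rotating so that $\nnu(0)=-e_d$, the boundary Almgren blow-up of $u$ at $0$ converges along subsequences to some $U\in\mathcal M_{0,2}(\R^d_+,N)$; by the classification of $\mathcal M_{0,2}$ recalled above, up to a rotation in the $x'$ variables one has $U_i=c\,x_{d-1}^-x_d^+$ and $U_j=c\,x_{d-1}^+x_d^+$ (the other components zero), with $c>0$ dimensional. Combining this with the boundary Schauder bound on the rescaled domains $r^{-1}D\to\R^d_+$, the rescalings $h_r:=h(r\,\cdot)/\sqrt{H(u,0,r)}$ converge in $C^1_{\mathrm{loc}}(\overline{\R^d_+})$ to $\pm c\,x_{d-1}x_d$ (up to swapping $i\leftrightarrow j$), consistently with $H(u,0,r)\sim C_0 r^4$.

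\emph{Behaviour up to $\partial D$ (the main step).} Next I would factor out the boundary, writing $h=\delta\,v$ in $D\cap B_{r_0/2}(x_0)$ with $\delta:=\dist(\cdot,\partial D)$. Since $h$ is harmonic and vanishes on $\partial D$, the quotient $v$ solves a degenerate elliptic equation with weight comparable to $\delta^2$; equivalently, flattening $\partial D$ by a $C^{1,\alpha}$ diffeomorphism and performing an odd reflection across the flattened boundary, $h$ becomes a solution of a uniformly elliptic, $C^{0,\alpha}$-coefficient divergence-form equation in a full ball, near a second-order zero whose leading polynomial $y_{d-1}y_d$ is a product of \emph{distinct} linear forms. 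The goal is to prove that $v$ is $C^1$ up to $\partial D$ with $v(x_0)=0$ and $\nabla v(x_0)\ne0$; by the blow-up of the previous paragraph, $\nabla v(x_0)$ is a nonzero multiple of the vector $\bm e\in\mathbb{S}^{d-2}\times\{0\}$ singled out in case (2) of \Cref{subsec:diff}, which is tangent to $\partial D$. I expect this step to be the main obstacle: it is a boundary-regularity statement for a degenerate problem (equivalently, a uniform control of the nodal set of $h$ as it approaches $\partial D$, delicate precisely because $\nabla h$ degenerates there), and making it rigorous in the $C^{1,\alpha}$/Dini setting is where the Almgren--Weiss monotonicity machinery of \cite{OV1} and the rigidity of the blow-up polynomial $y_{d-1}y_d$ (no moduli beyond rotations) are used.

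\emph{Conclusion.} Granted the previous step, in a ball $B_\rho(x_0)$ the set $\{v=0\}$ is a $C^1$ hypersurface through $x_0$ with tangent plane $\{y\cdot\bm e=0\}\ni\nnu(x_0)$, meeting $\partial D$ transversally, and $\{h=0\}\cap D=\{v=0\}\cap D$ there (as $\delta>0$ in $D$). On $\{v=0\}\cap D$ one has $\nabla h=\delta\,\nabla v\ne0$, so $\mathcal R_D(u)\cap B_\rho(x_0)=\{v=0\}\cap D$ (in particular $\mathcal S_D(u)\cap B_\rho(x_0)=\emptyset$), whence $\overline{\mathcal R_D(u)}\cap B_\rho(x_0)=\overline{\{v=0\}\cap D}$ is of class $C^1$ up to $\partial D$. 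On $\partial D$, $\partial_\nnu h=\pm v$, so $\{v=0\}\cap\partial D\cap B_\rho(x_0)=\{\partial_\nnu h=0\}\cap\partial D\cap B_\rho(x_0)=\mathcal R_{\partial D}(u)\cap B_\rho(x_0)$ (near $x_0$, by upper semicontinuity of the frequency and the classification of admissible frequencies, the boundary free boundary consists only of frequency-$2$ points), giving $\overline{\mathcal R_D(u)}\cap\partial D\cap B_\rho(x_0)=\mathcal R_{\partial D}(u)\cap B_\rho(x_0)$. Finally, along $\mathcal R_D(u)\cap B_\rho(x_0)$ the unit normal satisfies $\bm e_x=\pm\nabla h(x)/|\nabla h(x)|=\pm\nabla v(x)/|\nabla v(x)|\to\pm\bm e$ as $x\to x_0$, and $\bm e\cdot\nnu(x_0)=0$, so $\lim_{x\to x_0,\,x\in D}\bm e_x\cdot\nnu(x_0)=0$. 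Choosing the radius $r_0$ in \Cref{thm:fixed_free_bound} small enough that all of the above holds in $B_{r_0}(x_0)$ concludes the proof.
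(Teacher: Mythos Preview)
The paper is a survey and does not contain a proof of this theorem; it states the result and attributes the proof to \cite{OV1}. There is therefore no proof in the present paper against which to compare your proposal.

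That said, your outline is a reasonable strategy and is broadly consistent with the machinery the survey describes: reduce to the scalar harmonic function $h=u_i-u_j$ vanishing on $\partial D$, identify the frequency-$2$ blow-up as (a multiple of) $x_{d-1}x_d$ up to rotation, and then control the nodal structure up to the boundary. Your factorization $h=\delta\,v$ together with the odd-reflection reduction to a full-ball divergence-form equation with $C^{0,\alpha}$ coefficients is a natural alternative to the perturbed Almgren--Weiss monotonicity route outlined in the survey; it would rely on nodal-set regularity for such equations near a nondegenerate second-order zero (the blow-up $y_{d-1}y_d$ having distinct linear factors is exactly the nondegeneracy needed). You are candid that this ``main step'' is left as a black box, and indeed making it rigorous under the minimal $C^{1,\alpha\text{-Dini}}$ assumption on $\partial D$, rather than plain $C^{1,\alpha}$, is where the real work lies; the survey indicates that \cite{OV1} handles this via the perturbed monotonicity formulas of \Cref{sec:boundary} rather than by a direct nodal-set/reflection argument, which may be more robust at that level of boundary regularity.
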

		
		\vspace{1cm}

		\subsection*{Acknowledgements}
		The authors were supported by the European Research Council (ERC), through the European Union’s Horizon 2020 project ERC VAREG - Variational approach to the regularity of the free boundaries (grant agreement No. 853404). The authors acknowledge the MIUR Excellence Department Project awarded to the Department of Mathematics, University of Pisa, CUP I57G22000700001. B.V. also acknowledges support from the project MUR-PRIN “NO3” (n.2022R537CS).  
		
		\subsection*{Statements and Declarations} \emph{Competing Interests.} The authors have no competing interests to declare.

		\bibliographystyle{acm}
		\bibliography{biblio}

	\end{document}